\documentclass[preprint]{elsarticle}
\usepackage{amsmath,amssymb}
\usepackage{amsthm}
\usepackage{graphicx}
\theoremstyle{plain}
\newtheorem{thm}{Theorem}
\newtheorem{prop}[thm]{Proposition}
\begin{document}

\begin{frontmatter}

	\title{Implicit-explicit schemes for compressible Cahn-Hilliard-Navier-Stokes equations on staggered grids}
	
	\author[UV1]{Andreu Martorell}
	\ead{andreu.martorell@uv.es}
	\author[UV1]{Pep Mulet}
	\ead{mulet@uv.es}
	\author[UV1]{Dionisio F. Y\'a\~nez}
	\ead{dionisio.yanez@uv.es}
	\address[UV1]{Departament de Matem\`atiques.  Universitat de Val\`encia (EG) (Spain)}
	
	\begin{abstract}
        We propose a second-order implicit-explicit (IMEX) time-stepping scheme for the isentropic, compressible Cahn-Hilliard-Navier-Stokes equations discretized on staggered (MAC) grids.
        The scheme is based on finite difference approximations that ensure a stable coupling among the velocity, density and phase field, with symmetric operators acting on the discretized viscosity terms.
        
        Standard explicit methods suffer from severe time-step restrictions due to the presence of second to fourth-order diffusion terms introduced by the Cahn-Hilliard and Navier-Stokes operators.
        To overcome these challenges, we develop an IMEX Runge-Kutta scheme that treats the stiff terms implicitly while the convective terms are dealt with explicitly, with the advantage that only linear systems are solved at each stage.
        
        Numerical experiments are performed to verify the stability, accuracy and efficiency of the proposed approach.
	\end{abstract}
	
	\begin{keyword}
		Cahn-Hilliard equation, implicit-explicit schemes, Navier-Stokes equations, staggered grids.
	\end{keyword}
	
\end{frontmatter}

\section{Introduction}

In fluid dynamics, the Cahn-Hilliard-Navier-Stokes (CHNS) equations describe the behavior of two-phase flows with diffuse interfaces.
The Cahn-Hilliard (CH) equation captures the evolution of the phase separation process \cite{CahnHilliard59}, while the Navier-Stokes equations govern the fluid motion, accounting for viscosities and external forces like gravity.
The CHNS model has been successfully applied to a wide range of phenomena, including separation of immiscible fluids, bubble dynamics or sedimentation of colloidal suspensions \cite{Kynch52,Siano79}.

The CHNS equations admit both incompressible and compressible formulations,
each suitable for different physical scenarios.
In this work, however, we focus on the isentropic compressible version derived in \cite{AbelsFeireisl08}.

The numerical treatment of the compressible CHNS equations presents several challenges. 
The system consists on spatial partial differential operators of up to fourth-order, which restricts explicit schemes to time steps of size $\Delta t\approx\mathcal{O}(\Delta x^4)$, having large eigenvalues in the discretized operators, where $\Delta x$ is the step size.
Additionally, for ODE systems of the form $\mathbf{z}^\prime=f(\mathbf{z})$, the presence of negative-definite Jacobians with large eigenvalues allows implicit methods to take larger time steps than explicit methods, namely $\Delta t\approx\mathcal{O}(|\lambda|^{-1})$, with $|\lambda|$ denoting the largest absolute value of the Jacobian.

The inherent gradient flow structure introduced by the CH equation, whose energy functional is non-convex, complicates the stability.
In \cite{Eyre98,Vollmayr-Lee-Rutenberg2003}, an unconditionally gradient stable IMEX scheme for the CH equation was proposed by splitting the energy functional into a difference of two convex functions.
By treating the contractive part implicitly and the expansive part explicitly, the scheme was shown to be unconditionally stable.

Standard finite differences schemes for compressible flows often use collocated grids, where all the variables share the same spatial locations.
This arrangement can lead to numerical instabilities, especially when approximating high-order first derivatives due to the discretization of coupled variables at coincident points. 
In incompressible flows, it may also cause the classical checkerboard pressure-velocity decoupling \cite{harlow_welch_65,RhieChow83}.

One effective solution is to employ staggered meshes \cite{harlow_welch_65,Patankar18}, where velocity components are defined at different locations than scalar variables. 
This approach allows spatial derivatives to be approximated using centered differences without interpolation,
preventing artificial oscillations in the solution.
Moreover, staggered discretizations provide a more compact representation of advective and diffusive fluxes, 
and often preserve desirable properties such as symmetry and positive definiteness of diffusive operators, 
which are not always guaranteed on collocated grids.

Recent advances on the CHNS system include the work of \cite{AbelsFeireisl08}, 
where the existence of global-in-time-weak solutions for the compressible case is proven.
Several numerical methods have also been proposed for both compressible and incompressible versions.
For example, \cite{DhaouadiDumbserGavrilyuk25} introduces the CH equation in a first-order hyperbolic reformulation, 
while various energy-stable and mass-conservative schemes among other properties have been proposed in \cite{ChenZhao20,HanWang15,HeShi20,Jia20,LiXu21,mulet_24}.

In \cite{mulet_24}, a linearly IMEX finite difference scheme on collocated grids is analyzed for the isentropic, compressible CHNS equations. 
This approach employs mixed boundary conditions: 
homogeneous Dirichlet conditions for the velocity field and homogeneous Neumann conditions for both the phase parameter and the chemical potential.
When combined with a collocated finite-difference discretization, 
this choice results in non-symmetric discrete viscous operators for the velocity field, 
which may negatively affect the stability and robustness of the scheme, 
particularly in the equations strongly coupled to the momentum.

Numerically, these effects can produce reduced accuracy in the density and phase field variables.
In particular, violations of the expected bounds of the phase parameter were reported in \cite{mulet_24}.
One possible solution is to impose a more restrictive CFL time-step stability constraint when necessary. 
However, this strategy increases the computational cost, particularly in the 
presence of near-vacuum regions for the density or in strongly phase-separation regimes for the order parameter.

The goal of the present work is to design an efficient second-order linearly IMEX Runge-Kutta based on finite differences scheme on staggered meshes for the isentropic, compressible CHNS equations.
By treating the convective terms explicitly and the diffusive terms implicitly, the resulting method satisfies a time-step restriction governed only by the convective subsystem, i.e., $\Delta t\approx\mathcal{O}(\Delta x)$.
For the spatial discretization, we employ Mark and Cell (MAC) grids \cite{harlow_welch_65}: velocity components are located at cell-faces and density and phase field variables are stored at cell-centers.
With this staggered layout, the resulting discrete operators, including the viscous terms, are symmetric and positive definite.
Numerical results confirm the advantages of such approach: without decreasing the CFL number, it significantly reduces spurious oscillations compared with collocated grids, 
maintains non-negativity of the density, and preserves the physical bounds of the order parameter with only occasional minor violations.

The structure of the work is as follows. 
Section \ref{section_CHNS} introduces the isentropic, compressible CHNS equations in two dimensions.
In Section \ref{section_numerical_schemes} we present the spatial discretization based on a MAC approach together with the partitioned IMEX Runge-Kutta time integrator.
Section \ref{section_numerical_experiments} analyzes the performance of the proposed scheme ensuring stability, accuracy, and efficiency.
Finally, Section \ref{section_conlusion} summarizes the main contributions of the work and discusses further directions of our research.

\section{The isetropic compressible Cahn-Hilliard-Navier-Stokes Equations}\label{section_CHNS}
\subsection{Model Description}

In the present work, we study a variant of the model introduced in \cite{AbelsFeireisl08}.
Consider a binary fluid with mass concentrations $c_1$ and $c_2$, total density $\rho$, and bulk velocity $\mathbf{v}$.
Define the order parameter $c=c_1-c_2$ which distinguishes between different phases of the mixture.

Let $\Omega$ be an open subset of $\mathbb{R}^d, d=1,2,3$, and let $\varepsilon>0$ denote the diffuse-interface thickness parameter.
The Helmholtz free energy of the system is given by
$$\mathcal{E}(\rho, c)=\int_\Omega\!\left(\rho f(\rho, c) + \frac{\varepsilon}{2}|\nabla c|^2\right)\ d\mathbf{x}.$$
In an isentropic regime, the specific free energy $f$ is composed of one-phase fluid free energy $f_e$ and a double-well potential $\psi$.
Specifically, we define
\begin{equation*}
    f(\rho, c) = f_e(\rho) + \psi(c),\quad
    f_e(\rho)=C_p\frac{\rho^{\gamma-1}}{\gamma-1},\quad
    \psi(c) = \frac{1}{4}\left(c^2-1\right)^2,
\end{equation*}
where $C_p>0$ and $\gamma>1$ is the adiabatic exponent.
The Gibbs relation yields the pressure formula
$$p(\rho)=\rho^2\frac{\partial f(\rho, c)}{\partial\rho} = C_p\rho^\gamma.$$

Under these assumptions, the isentropic compressible CHNS with gravitational effects takes the form
\begin{equation}\label{compressible_CHNS}
  \left\{
  \begin{split}
    &\rho_t + \operatorname{div}(\rho \mathbf{v}) = 0,\\[2pt]
    &(\rho\mathbf{v})_t + \operatorname{div}(\rho\mathbf{v}\otimes\mathbf{v}) + \nabla p = \rho\mathbf{g} + \operatorname{div}\mathbb{T},\\[2pt]
    &(\rho c)_t + \operatorname{div}(\rho c\mathbf{v}) = \Delta\mu.
  \end{split}
  \right.
\end{equation}
The first and the second equations represent the conservation of mass and the balance of momentum taking into account the gravitation acceleration $\mathbf{g}$.
The third equation is a modified CH equation accounting for phase separation.
The stress tensor $\mathbb{T}$ includes both viscous and capillary contributions, namely,
\begin{equation*}
    \mathbb{T} = 
    \nu(\nabla\mathbf{v} + \nabla\mathbf{v}^T) + 
    \lambda\operatorname{div}\mathbf{v}\mathbb{I} + \frac{\varepsilon}{2}|\nabla c|^2\mathbb{I} - \varepsilon(\nabla c\otimes\nabla c),
\end{equation*}
where $\lambda,\nu>0$ denote the viscosity coefficients.
The generalized chemical potential is given by 
$$\mu = \frac{1}{\rho}\frac{\delta\mathcal{E}}{\delta c} = \frac{\partial f}{\partial c} - \frac{\varepsilon}{\rho}\Delta c.$$
The initial and boundary conditions are set to 
\begin{equation}\label{bdry_cond}
    \begin{split}
        &\rho(0,x) = \rho_0(x),\quad \mathbf{v}(0,x) = \mathbf{v}_0(x),\quad c(0,x) = c_0(x),\\
        &\mathbf{v}\left.\right|_{\partial\Omega} =
         \nabla c\cdot\mathbf{n}\left.\right|_{\partial\Omega} =
         \nabla \mu\cdot\mathbf{n}\left.\right|_{\partial\Omega} = 0,
    \end{split}
\end{equation}
where $\mathbf{n}$ denotes the outward unit normal vector to the boundary $\partial\Omega$.

For $\gamma > \frac{3}{2}$ and suitable initial data $(\rho_0, \mathbf{v}_0, c_0)$, the system \eqref{compressible_CHNS} supplemented with \eqref{bdry_cond} admits global-in-time weak solutions in the sense of Di Perna and Lions; see \cite[Theorem 1.2]{AbelsFeireisl08}. 

In what follows, we restrict our analysis to the two-dimensional version of \eqref{compressible_CHNS}. 
Denoting the velocity by $\mathbf{v} = (v_1, v_2)$ and assuming that
gravity acts only in the $y$ dimension, i.e., $\mathbf{g}=(0,g)$, the system becomes
\begin{subequations}\label{eq_compressible_chns_2D}
    \begin{align}
    \begin{aligned}
        \rho_t + (\rho v_1)_x + (\rho v_2)_y &= 0,
    \end{aligned}
    \label{eq_compressible_chns_2D_conser_mass}
    \\[8pt]
    \begin{aligned}
        (\rho v_1)_t + (\rho v_1^2 + p(\rho))_x + (\rho v_1 v_2)_y
        &= \tfrac{\varepsilon}{2}(c_y^2 - c_x^2)_x - \varepsilon(c_x c_y)_y \\
        &\quad + \nu \Delta v_1 + (\nu + \lambda)(v_{1,xx} + v_{2,xy}),
    \end{aligned}
    \label{eq_compressible_chns_2D_mom_x}
    \\[8pt]
    \begin{aligned}
        (\rho v_2)_t + (\rho v_2^2 + p(\rho))_y + (\rho v_1 v_2)_x
        &= \rho g + \tfrac{\varepsilon}{2}(c_x^2 - c_y^2)_y - \varepsilon(c_x c_y)_x \\
        &\quad + \nu \Delta v_2 + (\nu + \lambda)(v_{1,xy} + v_{2,yy}),
    \end{aligned}
    \label{eq_compressible_chns_2D_mom_y}
    \\[8pt]
    \begin{aligned}
        (\rho c)_t + (\rho c v_1)_x + (\rho c v_2)_y
        &= \Delta\!\left( \psi'(c) - \frac{\varepsilon}{\rho}\Delta c \right).
    \end{aligned}
    \label{eq_compressible_chns_2D_ch}
    \end{align}
\end{subequations}

\section{Numerical Schemes}\label{section_numerical_schemes}

\subsection{Spatial Semidiscretization}

The purpose of this section is to design a finite difference scheme on a MAC grid \cite{harlow_welch_65} for approximating the solution of \eqref{eq_compressible_chns_2D} over the spatial domain $\Omega=(0,1)^2$.

Let $\mathbf{x}_{i,j} = (x_i,y_j)$ denote a generic point on the staggered mesh, and $h=\frac{1}{M}$ be the spatial step size.
The primal grid, where the variables $\rho$ and $c$ are defined, is formed by the $M^2$ cell-center nodes
\begin{equation*}
    \mathbf{x}_{i,j}=\left(\left(i-\tfrac{1}{2}\right)h, \left(j-\tfrac{1}{2}\right)h\right),
    \quad 
    i,j=1,\cdots,M.
\end{equation*}
The dual grids, which store the velocity components, contain $M(M-1)$ nodes. 
For the horizontal velocity $v_1$, the grid points are 
\begin{equation*}
  \mathbf{x}_{i+\frac{1}{2},j} =
    (ih,\left(j-\tfrac{1}{2}\right)h), 
    \quad 
    i=1,\ldots,M,\enspace j=1,\ldots,M-1.
\end{equation*}
For the vertical velocity $v_2$, the corresponding nodes are 
\begin{equation*}
  {\mathbf{x}_{i, j+\tfrac{1}{2}}=(\left(i-\tfrac{1}{2}\right)h,  jh)},
    \quad
    i=1,\ldots,M-1,\enspace j=1,\ldots,M.
\end{equation*}
With this setup, equations \eqref{eq_compressible_chns_2D_conser_mass} and \eqref{eq_compressible_chns_2D_ch} are discretized on primal grids, while \eqref{eq_compressible_chns_2D_mom_x} and \eqref{eq_compressible_chns_2D_mom_y} at horizontal and vertical dual grids, respectively. 

To evaluate the momentum $\mathbf{m} = (m_1,m_2)=(\rho v_1, \rho v_2)$, we compute the density at staggered points using local averages of the density at primal points, i.e.,
\begin{equation*}
    \rho_{i+\frac12,j} = \frac{\rho_{i,j} + \rho_{i+1,j}}{2},
    \quad
    \rho_{i,j+\frac12} = \frac{\rho_{i,j} + \rho_{i,j+1}}{2},
\end{equation*}
running $i$, $j$ over their respective grid indexes.
Accordingly, the momentum components $m_k=\rho v_k, k=1,2$ are defined as 
\begin{equation*}
    m_{1,i+\frac12,j} = \rho_{i+\frac12,j}v_{1,i+\frac12,j},
    \quad
    m_{2,i,j+\frac12} = \rho_{i,j+\frac12}v_{2,i,j+\frac12}.
\end{equation*}
We define the matrices $u_{k}(t)$ for $k=1,\ldots,4$ and let $U(t) = (u_k(t))_{k=1}^4$ where
\begin{equation*}
    u_{1,i,j} = \rho_{i,j},
    \quad 
    u_{2,i+\frac12,j} = m_{1,i+\frac12,j}, 
    \quad 
    u_{3,i,j+\frac12} = m_{2,i,j+\frac12},
    \quad 
    u_{4,i,j} = q_{i,j},
\end{equation*}
with $q = \rho c$, and for $i,j$ running over their respective grid indices. 
For simplicity, we will henceforth omit the explicit dependence on the temporal variable $t$ in $U$.
With this notation, the spatially discretized CHNS system \eqref{eq_compressible_chns_2D} reduces to solve $N=2M^2+2M(M-1)$ ordinary differential equations
\begin{equation}\label{eq_semi_discrete_system}
    U^\prime = \mathcal{L}(U),
\end{equation}
where the nonlinear operator $\mathcal{L}(U)$ contains the spatial discretizations of the differential operators in \eqref{eq_compressible_chns_2D}.
Specifically, we write
$$\mathcal{L}(U)= {\mathcal{C}}(U) +{\mathcal{L}}_1(U)+{\mathcal{L}}_2(U)+{\mathcal{L}}_3(U)+{\mathcal{L}}_4(U),$$ 
where the convective terms are: 
\begin{equation*}
    \begin{split}
        {\mathcal{C}}(U)_{1,i,j}&\approx -(      (\rho v_1)_x+(\rho v_2)_{y})(\mathbf{x}_{i,j}, t),\\
        {\mathcal{C}}(U)_{2,i+\frac12,j}&\approx -((\rho v_1^2 + p(\rho))_{x}+(\rho v_1 v_2)_{y})(\mathbf{x}_{i+\frac12,j}, t),\\              
        {\mathcal{C}}(U)_{3,i,j+\frac12}&\approx -((\rho v_1v_2)_{x}+(\rho v_2^2+p(\rho))_{y})(\mathbf{x}_{i,j+\frac12}, t),  \\
        {\mathcal{C}}(U)_{4,i,j}&\approx -(       (\rho c v_1)_{x}+(\rho c v_2)_{y})(\mathbf{x}_{i,j}, t),
    \end{split}
\end{equation*}
and the nonzero blocks of the diffusive terms are:
\begin{equation*}
    \begin{split}
        {\mathcal{L}}_1(U)_{3,i,j+\frac12}
            &\approx g\rho(\mathbf{x}_{i,j+\frac12},t),\\
        {\mathcal{L}}_2(U)_{2,i+\frac12,j}&\approx 
            \varepsilon(
            \frac12(c_{y}^2)_{x}-\frac12 (c_{x}^2)_x- (c_{x}c_y)_{y})(\mathbf{x}_{i+\frac12,j}, t),\\
        {\mathcal{L}}_2(U)_{3,i,j+\frac12}&\approx 
            \varepsilon(      \frac12(c_{x}^2)_{y}-\frac12 (c_{y}^2)_y    -(c_xc_y)_x)(\mathbf{x}_{i,j+\frac12}, t),\\
        {\mathcal{L}}_{3}(U)_{4,i,j}
            &\approx\Delta(\psi'(c) - \frac{\varepsilon}{\rho}  \Delta c)  (\mathbf{x}_{i,j}, t),\\
        {\mathcal{L}}_4(U)_{2,i+\frac12,j}&\approx
            (\nu(  (v_1)_{xx}+(v_1)_{yy})+(\nu+\lambda)(  (v_1)_{xx}+(v_2)_{xy}))(\mathbf{x}_{i+\frac12,j}, t),\\
        {\mathcal{L}}_4(U)_{3,i,j+\frac12}&\approx
            (\nu((v_2)_{xx}+(v_2)_{yy})  +(\nu+\lambda)(  (v_1)_{xy}+(v_2)_{yy}))(\mathbf{x}_{i,j+\frac12}, t).
    \end{split}
\end{equation*}

\subsubsection{Basic Finite Difference Operators}

For approximating first-order derivatives on the MAC grid, we employ second-order finite difference operators at interior points and first-order otherwise satisfying the boundary conditions \eqref{bdry_cond}. 

On the primal grid, the discrete first derivative operator is represented by a central difference matrix
\begin{equation}\label{eq_primal_der}
    D_M^c = \frac{1}{2h}
    \begin{bmatrix}
        -1 & 1 & 0 & \dots & 0\\
        -1 & 0 & 1 & \dots & 0\\
        \vdots & \ddots & \ddots & \ddots & \vdots\\
        0 & \dots & -1 & 0 & 1\\
        0 & \dots & 0 & -1 & 1
    \end{bmatrix} \in \mathbb{R}^{M \times M}.
\end{equation}
On the dual grids, the derivatives are approximated using the
$M\times (M-1)$ matrices
\begin{equation}\label{eq_stag_der}
    D_M = \frac{1}{h}
    \begin{bmatrix}
        1 & 0 & 0 & \dots & 0\\
        -1 & 1 & 0 & \dots & 0\\
        \cdots & \cdots & \cdots & \cdots & \cdots\\
        0 & \dots & 0 & -1 & 1\\
        0 & \dots & 0 & 0 & -1
    \end{bmatrix}, 
    \quad
    D_M^{\ast} = \frac{1}{h}
    \begin{bmatrix}
        2 & 0 & 0 & \dots & 0\\
        -1 & 1 & 0 & \dots & 0\\
        \cdots & \cdots & \cdots & \cdots & \cdots\\
        0 & \dots & 0 & -1 & 1\\
        0 & \dots & 0 & 0 & -2
    \end{bmatrix}.
\end{equation}
We define the discrete Laplacian operator with Neumann boundary conditions acting on primal grids as 
\begin{equation*}\label{eq_lap_mat}
    L_M = \frac{1}{h^2}
    \begin{bmatrix}
        -1 & 1 & 0 & \dots & 0\\
        1 & -2 & 1 & \dots & 0\\
        \vdots & \ddots & \ddots & \ddots & \vdots\\
        0 & \dots & 1 & -2 & 1\\
        0 & \dots & 0 & 1 & -1
    \end{bmatrix}
    \in\mathbb{R}^{M\times M}.
\end{equation*}
Then, given a function $f\in\mathcal{C}^4(\mathbb{R}^2)$ satisfying the homogeneous Neumann boundary conditions with $f_{i,j} = f(\mathbf{x}_{i,j})$ for $i,j=1,\ldots,M$, it is obtained that
\begin{equation}\label{eq_lap_2D_disct}
    \Delta f(\mathbf{x}_{i,j})\approx \Delta_hf_{i,j} = (L_Mf)_{i,j} + (fL_M^T)_{i,j}.
\end{equation}
For interpolating the variables between primal and dual locations, we define the average matrix
\begin{equation}\label{eq_mat_avg}
    A_M = \frac12
    \begin{bmatrix}
        1 & 1 & 0 & \dots & 0\\
        0 & 1 & 1 & \dots & 0\\
        \vdots & \ddots & \ddots & \ddots & \vdots\\
        0 & \dots & 0 & 1 & 1
    \end{bmatrix}\in \mathbb{R}^{(M-1) \times M}.
\end{equation}
Finally, for matrices $f$, $g$ in $\mathbb{R}^{n\times m}$, we denote their pointwise product by 
$$f\ast g = (f_{i,j}g_{i,j})_{i,j}.$$

\subsubsection{The Operator $\mathcal{C}$}\label{section_operator_C}

The operator $\mathcal{C}$ acts only on the convective subsystem of \eqref{eq_compressible_chns_2D}. 
The fluxes in the $x$- and $y$-directions are, respectively,
\begin{equation*}
F(U) =
\begin{bmatrix}
F^{\rho} \\[5pt]
F^{m_1} \\[5pt]
F^{m_2} \\[5pt]
F^{q}
\end{bmatrix}
=
\begin{bmatrix}
m_1 \\[5pt]
\frac{m^2_1}{\rho} + p(\rho) \\[5pt]
\frac{m_1m_2}{\rho} \\[5pt]
\frac{m_1 q}{\rho}
\end{bmatrix},
\qquad
G(U) =
\begin{bmatrix}
G^{\rho} \\[5pt]
G^{m_1} \\[5pt]
G^{m_2} \\[5pt]
G^{q}
\end{bmatrix}
=
\begin{bmatrix}
m_2 \\[5pt]
\frac{m_1m_2}{\rho} \\[5pt]
\frac{m^2_2}{\rho} + p(\rho) \\[5pt]
\frac{m_2 q}{\rho}
\end{bmatrix}.
\end{equation*}
Let $\hat{F}^{\ast}$ and $\hat{G}^{\ast}$ denote their associated numerical fluxes to $F^{\ast}$ and $G^{\ast}$, respectively.
We compute them using the Rusanov flux with WENO5 reconstructions which are fifth-order accurate for finite differences schemes \cite{BBMZ19a,BBMZ19b,PMP19,Shu09}.
We use this reconstruction since its additional precision does not impact significantly the computational cost.
Let $\mathcal{W}^{x}$ denote the  WENO operator in the horizontal direction.
For a grid function $f(\mathbf{x}_{i,j}) = f_{i,j}$, its right and left reconstructions on primal grids are
\begin{equation*}
  f^{+}_{i+\frac12,j} = \mathcal{W}^{x}\left(f_{i+3,j},\ldots,f_{i-1,j}\right),
  \quad
  f^{-}_{i+\frac12,j} = \mathcal{W}^{x}\left(f_{i-2,j},\ldots,f_{i+2,j}\right),
\end{equation*}
and on dual grids,
\begin{align*}
  f^{+}_{i,j}
    &= \mathcal{W}^{x}\left(f_{i+\frac52,j},\ldots,f_{i-\frac32,j}\right), &
  f^{-}_{i,j}
    &= \mathcal{W}^{x}\left(f_{i-\frac52,j},\ldots,f_{i+\frac32,j}\right), \\[8pt]
  f^{+}_{i+\frac12,j+\frac12}
    &= \mathcal{W}^{x}\left(f_{i+3,j+\frac12},\ldots,f_{i-1,j+\frac12}\right), &
  f^{-}_{i+\frac12,j+\frac12}
    &= \mathcal{W}^{x}\left(f_{i-2,j+\frac12},\ldots,f_{i+2,j+\frac12}\right).
\end{align*}
By the no-slip boundary
conditions we have $m_{1,\frac12,j} = m_{1,M+\frac12,j} = 0$ and $m_{1,i,\frac12} = m_{1,i,M+\frac12} = 0$ for $i,j=1,\ldots,M$. 
Outside the domain, variables are extended by refection: $\rho$ and $q$ symmetrically, and momentum antisymmetrically (see \cite{harlow_welch_65}).

Consequently,
\begin{equation*}
    \begin{split}
        \mathcal{C}(U)_{1,i,j} &\approx -\frac{\hat{F}^{\rho}_{i+\frac12,j} - \hat{F}^{\rho}_{i-\frac12,j}}{h} -\frac{\hat{G}^{\rho}_{i,j+\frac12} - \hat{G}^{\rho}_{i,j-\frac12}}{h},\\
        \mathcal{C}(U)_{2,i+\frac12,j} &\approx -\frac{\hat{F}^{m_1}_{i+1,j} - \hat{F}^{m_1}_{i,j}}{h} -\frac{\hat{G}^{m_1}_{i+\frac12,j+\frac12} - \hat{G}^{m_1}_{i+\frac12,j-\frac12}}{h},\\
        \mathcal{C}(U)_{3,i,j+\frac12} &\approx -\frac{\hat{F}^{m_2}_{i+\frac12,j+\frac12} - \hat{F}^{m_2}_{i-\frac12,j+\frac12}}{h} -\frac{\hat{G}^{m_2}_{i,j+1} - \hat{G}^{m_2}_{i,j}}{h},\\
        \mathcal{C}(U)_{4,i,j} &\approx -\frac{\hat{F}^{q}_{i+\frac12,j} - \hat{F}^{q}_{i-\frac12,j}}{h} -\frac{\hat{G}^{q}_{i,j+\frac12} - \hat{G}^{q}_{i,j-\frac12}}{h},
    \end{split}
\end{equation*}
where the numerical fluxes are given by
\begin{equation*}
    \begin{split}
        &\hat{F}^{\rho}_{i+\frac12,j} 
        =
        \frac{m^{+}_{1,i+1,j} + m^{+}_{1,i,j}}{2}
        - \frac{\lambda^{\rho}_{i+\frac12,j}}{2}\left(\rho^{+}_{i+\frac12,j} - \rho^{-}_{i+\frac12,j}\right),\\[11pt]
        &\hat{F}^{m_1}_{i,j} 
        = \frac12\left(
            \left(\frac{m^2_1}{\rho} + p(\rho)\right)^{+}_{i,j} + \left(\frac{m^2_1}{\rho} + p(\rho)\right)^{-}_{i,j}\right)
            - \frac{\lambda^{m_1}_{i,j}}{2}\left(m^{+}_{1,i,j} - m^{-}_{1,i,j}\right),\\[11pt]
        &\hat{F}^{m_2}_{i+\frac12,j+\frac12} 
        = \frac12\left(
            \left(\frac{m_1m_2}{\rho}\right)^{+}_{i+\frac12,j+\frac12} + \left(\frac{m_1m_2}{\rho}\right)^{-}_{i+\frac12,j+\frac12}\right) 
            \\[8pt]
            &\hspace*{100pt}
            - \frac{\lambda^{m_2}_{i+\frac12,j+\frac12}}{2}\left(m^{+}_{2,i+\frac12,j+\frac12} - m^{-}_{2,i+\frac12,j+\frac12}\right),\\[11pt]
        &\hat{F}^{q}_{i+\frac12,j} 
        = \frac12\left(\left(\frac{m_1 q}{\rho}\right)^{+}_{i+\frac12,j} + \left(\frac{m_1 q}{\rho}\right)^{-}_{i+\frac12,j}\right)
        - \frac{\lambda^{c}_{i+\frac12,j}}{2}\left(q^{+}_{i+\frac12,j} - q^{-}_{i+\frac12,j}\right).
    \end{split}
\end{equation*}
Here, the numerical viscosities $\lambda^\ast$ are defined as the maximum of the upper bounds of the local characteristic speeds at the reconstructed states of each numerical flux,
which are given by $\max\{|v_1| + \sqrt{p^\prime(\rho)}\}$ and $\max\{|v_2| + \sqrt{p^\prime(\rho)}\}$, respectively.

Values of $\rho$ on the dual grid and momentum on the primal grid are obtained using a sixth-order transfer operator
$\mu = \frac{1}{256}\begin{bmatrix}3&-25&150&150&-25&3\end{bmatrix}$.
Hence,
\begin{equation*}
  \rho_{i+\frac12,j} = \sum_{k=i-3}^{i+2}{\mu}_{k+4-i}\rho_{k,j},
  \quad 
  m_{1,i,j} = \sum_{k=i-3}^{i+2}{\mu}_{k+4-i}m_{1,k+\frac12,j},
\end{equation*}
and analogously in the $y$-direction. 
To evaluate $\hat{F}^{m_2}$, $m_1$ is needed at staggered points where it is not directly defined.
So, we first average $m_{1}$ vertically obtaining $m_{1,i+\frac12,j+\frac12}$
and then apply the transfer operator in the $x$-direction to obtain $m_{1,i,j+\frac12}$.
Similarly, for the $\hat{G}^{m_1}$ flux.

\subsubsection{The Operator $\mathcal{L}_1$}
The only nonzero component of the operator $\mathcal{L}_1$ is approximated point-wise, that is,
\begin{equation*}
    \mathcal{L}_1(U)_3 \approx \rho A_M^T g,
\end{equation*}
where the matrix $A_M$ is defined in \eqref{eq_mat_avg}.

\subsubsection{The Operator $\mathcal{L}_2$}
The operator $\mathcal{L}_2$ involves the derivatives of the $c$-variable in the balance of momentum \eqref{eq_compressible_chns_2D_mom_x}-\eqref{eq_compressible_chns_2D_mom_y}.
We employ the following finite-difference approximations, which satisfy the boundary conditions \eqref{bdry_cond},
and are second-order accurate at interior points and first-order accurate otherwise.

Fix $i=1,\ldots,M-1$, $j=1,\ldots,M$. 
The pure double derivative terms in $\mathcal{L}_2(U)_2$ are approximated as
\begin{equation*}
    \begin{split}
        &(c^2_x)_x(\mathbf{x}_{i+\frac12,j}) 
        \approx 
        \begin{cases}
            \displaystyle\frac{\left(c_{M,j} - c_{i,j}\right)^2 - \left(c_{i+1} - c_{1}\right)^2}{4h^3}&\mbox{ if } i=1,\\[11pt]
            \displaystyle\frac{\left(c_{i+2,j} - c_{i,j}\right)^2 - \left(c_{i+1} - c_{i-1}\right)^2}{4h^3}&\mbox{ if } 1<i<M-1,\\[11pt]
            \displaystyle\frac{\left(c_{M,j} - c_{i,j}\right)^2 - \left(c_{i+1} - c_{i-1}\right)^2}{4h^3}&\mbox{ if } i=M-1,
        \end{cases}
        \\[11pt]
        &(c^2_y)_x(\mathbf{x}_{i+\frac12,j}) 
        \approx 
        \begin{cases}
            \displaystyle\frac{\left(c_{i+1,j+1} - c_{i+1,1}\right)^2 - \left(c_{i,j+1} - c_{i,1}\right)^2}{4h^3}&\mbox{ if } j=1,\\[11pt]
            \displaystyle\frac{\left(c_{i+1,j+1} - c_{i+1,j-1}\right)^2 - \left(c_{i,j+1} - c_{i,j-1}\right)^2}{4h^3}&\mbox{ if } 1<j<M,\\[11pt]
            \displaystyle\frac{\left(c_{i+1,M} - c_{i+1,j-1}\right)^2 - \left(c_{i,M} - c_{i,j-1}\right)^2}{4h^3}&\mbox{ if } j=M,
        \end{cases}
    \end{split}
\end{equation*}
For the mixed derivative term $(c_xc_y)_y$, the interior approximation for $j<M$ is
\begin{equation*}
    \begin{split}
        (c_xc_y)_y(\mathbf{x}_{i+\frac12,j})
        &\approx
        \frac{1}{4h^3}
        \left(c_{i+1,j+1} + c_{i+1,j} - c_{i,j+1} - c_{i,j}\right)
        \left(c_{i+1,j+1} + c_{i,j+1} - c_{i+1,j} - c_{i,j}\right)
        \\[11pt]
        &-
        \frac{1}{4h^3}
        \left(c_{i+1,j} + c_{i+1,j-1} - c_{i,j} - c_{i,j-1}\right)
        \left(c_{i+1,j} + c_{i,j} - c_{i+1,j-1} - c_{i,j-1}\right),
    \end{split}
\end{equation*}
and at the boundary points with $j=1,M$ is
\begin{equation*}
    \begin{split}
        (c_xc_y)_y(\mathbf{x}_{i+\frac12,1})
        &\approx
        \frac{1}{4h^3}
        \left(c_{i+1,2} + c_{i+1,1} - c_{i,2} - c_{i,1}\right)
        \left(c_{i,2} + c_{i+1,2} - c_{i,1} - c_{i+1,1}\right)
        ,\\[8pt]
        (c_xc_y)_y(\mathbf{x}_{i+\frac12,M}) 
        &\approx
        \frac{1}{4h^3}
        \left(c_{i+1,M} + c_{i+1,M-1} - c_{i,M} - c_{i,M-1}\right)
        \left(c_{i,M} + c_{i+1,M} - c_{i,M-1} - c_{i+1,M-1}\right)
        .
    \end{split}
\end{equation*}
Analogous expressions are obtained for the $\mathcal{L}_2(U)_3$. 

The nonzero blocks of $\mathcal{L}_2$ expressed in matrix form using the finite-differences matrices \eqref{eq_primal_der}, \eqref{eq_stag_der}, \eqref{eq_mat_avg} are:
\begin{equation*}
    \begin{split}
        \mathcal{L}_2(U)_2
        &\approx
        -\frac{\varepsilon}{2}(D^T_M((c(D^c_M)^T)*(c(D^c_M)^T)) - D^T_M(D^c_Mc*D^c_Mc)) 
        \\[5pt]
        &\qquad\quad
        + \varepsilon((D^T_McA_M^T)*(A_McD_M))D^T_{M},
        \\[8pt]
        \mathcal{L}_2(U)_3
        &\approx
        -\frac{\varepsilon}{2}((D^c_Mc*D^c_Mc)D_M - ((c(D^c_M)^T)*(c(D^c_M)^T))D_M) 
        \\[5pt]
        &\qquad\quad
        + \varepsilon D_M((D^T_McA_M^T)*(A_McD_M)).
    \end{split}
\end{equation*}

\subsubsection{The Operator $\mathcal{L}_3$}\label{section_eyre_splitting}
The treatment of operator $\mathcal{L}_3$, derived from the Cahn-Hilliard-type \eqref{eq_compressible_chns_2D_ch} equation, requires a special care. 
The difficulty stems from the fact that the double-well potential is of a convex-concave type.
As a result, positive definite contributions may appear on the right-hand side making delicate the stability analysis.
A classical strategy, introduced by Eyre in \cite{Eyre98}, ensures unconditional stability for the Cahn-Hilliard equation
by splitting the potential into a convex part $\psi_1$ and a concave part $\psi_2$, 
treating the term with $\psi^\prime_1$ implicitly and the term with $\psi^\prime_2$ explicitly.
Specifically, as in \cite{mulet_24}, we choose
$$\psi^\prime_1(c) = 2c \quad\text{and}\quad \psi^\prime_2(c) = c^3-3c.$$
Consequently, the boundary conditions \eqref{bdry_cond} and
the chain rule yield first-order approximations at points neighboring the boundary and second-order otherwise:
\begin{equation}\label{psi_i_derivatives}
    \begin{split}
        (\Delta\psi^\prime_1(c))(\mathbf{x}_{i,j}) 
        &\approx 2\Delta_h c_{i,j},
        \\[11pt]
        (\psi_2^{\prime\prime}(c)c_x)_{x}(\mathbf{x}_{i,j}), 
        &\approx
        \begin{cases}
            \frac{(\psi_2^{\prime\prime}(c_{i+1,j})+\psi_2^{\prime\prime}(c_{i,j}))(c_{i+1,j}-c_{i,j})}{2h^2}&i=1,\\[3pt]
            \frac{(\psi_2^{\prime\prime}(c_{i+1,j})+\psi_2^{\prime\prime}(c_{i,j}))(c_{i+1,j}-c_{i,j})-(\psi_2^{\prime\prime}(c_{i,j})+\psi_2^{\prime\prime}(c_{i-1,j}))(c_{i,j}-c_{i-1,j})}{2h^2}&1<i<M,\\[3pt]
            \frac{-(\psi_2^{\prime\prime}(c_{i,j})+\psi_2^{\prime\prime}(c_{i-1,j}))(c_{i,j}-c_{i-1,j})}{2h^2}&i=M,\\
        \end{cases}
        \\[11pt]
        (\psi_2^{\prime\prime}(c)c_y)_{y}(\mathbf{x}_{i,j}) 
        &\approx
        \begin{cases}
            \frac{(\psi_2^{\prime\prime}(c_{i,j+1})+\psi_2^{\prime\prime}(c_{i,j}))(c_{i,j+1}-c_{i,j})}{2h^2}&j=1,\\[3pt]
            \frac{(\psi_2^{\prime\prime}(c_{i,j+1})+\psi_2^{\prime\prime}(c_{i,j}))(c_{i,j+1}-c_{i,j})-(\psi_2^{\prime\prime}(c_{i,j})+\psi_2^{\prime\prime}(c_{i,j-1}))(c_{i,j}-c_{i,j-1})}{2h^2}&1<j<M,\\[3pt]
            \frac{-(\psi_2^{\prime\prime}(c_{i,j})+\psi_2^{\prime\prime}(c_{i,j-1}))(c_{i,j}-c_{i,j-1})}{2h^2}&j=M,    
        \end{cases}
    \end{split}
\end{equation}
for $i,j=1,\ldots,M$, and $\Delta_h$ defined in \eqref{eq_lap_2D_disct}. 
Similarly, the derivatives in $y$-direction are obtained.

Notice that by \eqref{bdry_cond},
$$\nabla c\cdot\mathbf{n} = \nabla\left(\frac{\Delta c}{\rho}\right)\cdot\mathbf{n} = 0.$$
Thus, denoting by $D$ the diagonal operator matrix defined as 
$$(D(v) w)_{i,j} = v_{i,j} w_{i,j},$$
for $v,w \in \mathbb{R}^{M\times M}$ we approximate 
$$\Delta\left(\frac{1}{\rho}\Delta c\right)(\mathbf{x}_{i,j}) \approx \left(\Delta_hD(\rho)^{-1}\Delta_h c\right)_{i,j},$$
for $i,j=1,\ldots,M$.

\subsubsection{The Operator $\mathcal{L}_4$}
The operator $\mathcal{L}_4$ accounts for the derivatives of the velocity field appearing in the momentum equation \eqref{eq_compressible_chns_2D_mom_x}-\eqref{eq_compressible_chns_2D_mom_y}. 

Fix $i=1,\ldots,M-1$ and $j=1,\ldots,M$. 
The second-order derivative terms in $\mathcal{L}_4(U)_2$ are approximated as follows:
\begin{equation*}
    \begin{split}
        &\left(v_1\right)_{xx}\left(\mathbf{x}_{i+\frac12, j}\right) =
        \begin{cases}
            \displaystyle\frac{v_{1,i+\frac{3}{2},j} - 2v_{1,i+\frac12,j}}{h^2},&\mbox{for } i=1,\\[11pt]
            \displaystyle\frac{v_{1,i+\frac{3}{2},j} - 2v_{1,i+\frac12,j} + v_{1,i-\frac12,j}}{h^2},&\mbox{for } 1<i<M-1,\\[11pt]
            \displaystyle\frac{v_{1,i-\frac12,j} - 2v_{1,i+\frac12,j}}{h^2},&\mbox{for } i=M-1,
        \end{cases}
        \\[11pt]
        &\left(v_1\right)_{yy}\left(\mathbf{x}_{i+\frac12, j}\right) =
        \begin{cases}
            \displaystyle\frac{v_{1,i+\frac12,j+1} - 3v_{1,i+\frac12,j}}{h^2},&\mbox{for } j=1,\\[11pt]
            \displaystyle\frac{v_{1,i+\frac12,j+1} - 2v_{1,i+\frac12,j} + v_{1,i+\frac12,j-1}}{h^2},&\mbox{for } 1<j<M,\\[11pt]
            \displaystyle\frac{v_{1,i+\frac12,j-1} - 3v_{1,i+\frac12,j}}{h^2},&\mbox{for } j=M,
        \end{cases} 
    \end{split}
\end{equation*}
The cross derivative $(v_2)_{xy}$ is computed by
\begin{equation*}
    \left(v_2\right)_{xy}\left(\mathbf{x}_{i+\frac12, j}\right) =
    \begin{cases}
        \displaystyle\frac{v_{2,i+1, j+\frac12} - v_{2,i, j+\frac12}}{h^2},&\mbox{for } j=1,\\[11pt]
        \displaystyle\frac{(v_{2,i+1, j+\frac12} - v_{2,i, j+\frac12}) - (v_{2,i+1, j-\frac12} - v_{2,i, j-\frac12})}{h^2},&\mbox{for } 1<j<M,\\[11pt]
        \displaystyle\frac{v_{2,i+1, j-\frac12} - v_{2,i, j-\frac12}}{h^2},&\mbox{for } j=M.
    \end{cases}
\end{equation*}
Similar expression are obtained for the block $\mathcal{L}_4(U)_3$.

The three formulas above satisfy the no-slip boundary condition established for velocity field $\mathbf{v}$.
In addition, these approximations are second-order accurate at interior points and first-order accurate near boundaries.

In matrix form, the nonzero blocks of $\mathcal{L}_4(U)$ using \eqref{eq_stag_der} read as
\begin{equation}\label{L_4}
    \begin{split}
    \mathcal{L}_4(U)_2
    &\approx 
    -((2\nu + \lambda)D_M^TD_Mv_1 + \nu v_1(D^{\ast}_{M+1})^TD_{M+1} + (\nu + \lambda)D^T_{M}v_2D^T_M),\\[8pt]    
    \mathcal{L}_4(U)_3
    &\approx 
    -((2\nu + \lambda)v_2D_M^TD_M + \nu D^T_{M+1}D^{\ast}_{M+1}v_2 + (\nu + \lambda)D_{M}v_1D_M).   
    \end{split}
\end{equation}

\subsection{Vector Implementation}

In this section, we rewrite the semi-discrete system \eqref{eq_semi_discrete_system} in vector form. 
Let $\varrho$, $\varrho_{x}$, $\varrho_{y}$, $V_1$, $V_2$, $C$ denote the vectorization of the matrices $\rho$, $(\rho_{i+\frac12,j})_{i,j}$, $(\rho_{i,j+\frac12})_{i,j}$, $v_1$, $v_2$ and $c$, respectively, where the vectorization operator is defined by   
$$\operatorname{vec}(A)_{i+m(j-1)}=A_{i,j},\quad\text{for}\quad 1\leq i\leq n,\enspace 1\leq j\leq m,$$
for any matrix $A\in\mathbb{R}^{n\times m}$. 
Let $I_n$ be the identity matrix of size $n$, and $\otimes$ be the Kronecker product.
We use the symbol $\ast$ to denote the element-wise product between two vector in $\mathbb{R}^n$.
Then,
\begin{equation*}
    \operatorname{vec}(U)= 
    \begin{bmatrix}
        \varrho \\ \varrho_{x} * V_1 \\ \varrho_{y} * V_2 \\ \varrho * C
    \end{bmatrix},
\end{equation*}
and system \eqref{eq_semi_discrete_system} expressed in vector form is
\begin{equation}\label{eq_vec_semi_discrete_system}
    \operatorname{vec}(U^\prime(t)) = \operatorname{vec}(\mathcal{L}(U)).
\end{equation}
From now on, we shall denote $U$, $\mathcal{C}(U)$, $\mathcal{L}_k(U)$ for $k=1,\ldots,4$ both the vectorization and the matrix form whenever there is no risk of confusion.
We only detail the vectorization of the operators explicitly used throughout the paper; the remaining ones are obtained in a similar manner.
\begin{equation*}
  \begin{split}
    \mathcal{L}_3(U)_4 &= 2\Delta_h C + \mathcal{M}_2(C)C - \Delta_h(D(\varrho^{-1})\Delta_h C), \\
    \mathcal{L}_4(U)_{2}&= -A_{1,1}V_1 - A_{1,2}V_2,\\
    \mathcal{L}_4(U)_{3}&= -A_{2,1}V_1 - A_{2,2}V_2,
  \end{split}
\end{equation*}
where $\Delta_h$, $\mathcal{M}_2$ are the tensor form of each $\psi_1$, $\psi_2$ in expressions \eqref{psi_i_derivatives}, respectively, and
\begin{equation}\label{vel_matrices}
    \begin{split}
        A_{1,1} &=(2\nu+\lambda)I_M\otimes (D^T_MD_M)+\nu(D_{M+1}^T D^{\ast}_{M+1}) \otimes I_{M-1},\\
        A_{1,2} &=(\nu+\lambda)D_M \otimes D_M^T,\\
        A_{2,1} &=(\nu+\lambda)D_M^T \otimes D_M,\\
        A_{2,2} &=(2\nu+\lambda)(D_M^T D_M) \otimes I_M + \nu I_{M-1} \otimes (D_{M+1}^T D^{\ast}_{M+1}).\\
    \end{split}
\end{equation}
The one-dimensional case follows immediately as a particular simplification of the above formulation.

\subsection{Implicit-Explicit Schemes}\label{section_imex_schemes}

To solve the semi-discrete ODE system \eqref{eq_vec_semi_discrete_system},
by means of an IMEX partitioned Runge-Kutta scheme \cite{BBMRV15,PR05,mulet_24}, 
we introduce the operator
\begin{equation*}
    \tilde{\mathcal{L}}(\tilde{U}, U) 
    = 
    \mathcal{C}(U) + {\mathcal{L}}_1({U}) + 
    \mathcal{L}_2({U}) + \tilde{\mathcal{L}}_3(\tilde{U}, U) + 
    \mathcal{L}_4(U),
    \quad
    \tilde{U}, U\in\mathbb{R}^N,
\end{equation*}
where the only nonzero block of $\tilde{\mathcal{L}}_3$ is given by
\begin{equation*}
    \begin{split}
        \tilde{\mathcal{L}}_3(\tilde{U}, U)_{4} &= \mathcal{M}_1C + \mathcal{M}_2(\tilde{C})\tilde{C} - \Delta_h(D(\varrho^{-1})\Delta_h C).
    \end{split}
\end{equation*}
By construction, $\mathcal{L}(U) = \tilde{\mathcal{L}}(U, U)$.
Thus, the IVP 
\begin{equation}\label{discrete_ODE}
	\begin{split}
		& U^\prime = \tilde{\mathcal{L}}(U, U),\\
		& U(0) = U_0,
	\end{split}
\end{equation} 
is equivalent to 
\begin{equation}\label{partitioned_RK}
	\begin{split}
		\tilde{U}^\prime &= \tilde{\mathcal{L}}(\tilde{U}, U),\\
		U^\prime &= \tilde{\mathcal{L}}(\tilde{U}, U),\\
		\tilde{U}(0) &= U(0) = U_0.
	\end{split}
\end{equation}
To discretize \eqref{partitioned_RK}, we apply a partitioned Runge-Kutta scheme \cite{mulet_24},
in which the variables with tilde denote the explicit terms, and the remaining ones are treated implicitly.
The method employs two different $s$-stage Butcher tableaus:
\begin{equation*}
	\begin{array}{c|c}
		\tilde{\gamma} & \tilde{\alpha} \\
		\hline
		&\tilde{\beta}^T
	\end{array},
	\qquad
	\begin{array}{c|c}
		\gamma & \alpha \\
		\hline
		&\beta^T
	\end{array},
\end{equation*}
where the first tableau corresponds to the explicit part of the scheme and the second to the diagonally implicit part.
As shown in \cite{PR05}, if both Butcher tableaus are second-order accurate, the resulting partitioned Runge-Kutta method is also second-order accurate.
Assuming that $\beta=\tilde{\beta}$ and $U^n=\tilde{U}^n$, the scheme reads:
\begin{equation}\label{PRK_scheme}
    \begin{split}
        \tilde{U}^{(i)} &= U^n + \Delta t\sum_{j<i}\tilde{\alpha}_{i, j}\tilde{\mathcal{L}}(\tilde{U}^{(j)}, U^{(j)}),\\[2pt]
        U^{(i)} &= U^n + \Delta t\sum_{j<i}\alpha_{i, j}\tilde{\mathcal{L}}(\tilde{U}^{(j)}, U^{(j)}) + \Delta t\alpha_{i, i}\tilde{\mathcal{L}}(\tilde{U}^{(i)}, U^{(i)}),\\[2pt]
        \tilde{U}^{n+1} =        
        U^{n+1} &= U^n + \Delta t\sum_{j=1}^s\beta_{j}\tilde{\mathcal{L}}(\tilde{U}^{(j)}, U^{(j)}),
    \end{split}
  \end{equation}
so there is no need of doubling variables.

\subsection{Solution to the Linear Systems}
At each intermediate stage $i=1,\cdots,s$, the scheme \eqref{PRK_scheme} requires solving a system for $U^{(i)}$ of the form
\begin{equation}\label{nonlinear_eq}
    U^{(i)} = U^n + \Delta t\sum_{j<i}\alpha_{i, j}\tilde{\mathcal{L}}(\tilde{U}^{(j)}, U^{(j)}) + \Delta t\alpha_{i, i}\tilde{\mathcal{L}}(\tilde{U}^{(i)}, U^{(i)}).
\end{equation}
Fixed the $i$ stage, we drop the dependence of the superscript $(i)$ from the variables in $\tilde{U}^{(i)}$ and $U^{(i)}$.
Denoting by hat $\widehat{\cdot}$ the terms computed explicitly in the current stage, namely,
\begin{equation*}
    \widehat{U} = 
    \begin{bmatrix}
        \widehat{\varrho} \\[5pt] \widehat{(\varrho_{x}*V_1)} \\[5pt]
        \widehat{(\varrho_{y}*V_2)} \\[5pt] \widehat{(\varrho\ast C)}
    \end{bmatrix}
    = U^n + \Delta t\sum_{j<i}\alpha_{i, j}\tilde{\mathcal{L}}(\tilde{U}^{(j)}, U^{(j)}) + \Delta t\alpha_{i, i}\mathcal{C}(\tilde{U}),
\end{equation*}
and using \eqref{vel_matrices}, one can rewrite the system above in terms of the density $\varrho$, velocity components $(V_1,V_2)$, and the order parameter $C$:
\begin{subequations}
    \begin{align}
        \varrho - \widehat{\varrho}
        &= \mathbf{0},
        \label{eq_nonlinear_rho}\\[8pt]
        (\varrho_{x}*V_1) - \widehat{(\varrho_{x}*V_1)} + \Delta ta_{i,i}\left(
            A_{1,1}V_1 + A_{1,2}V_2 - \mathcal{L}_2(U)_2\right) 
        &= \mathbf{0}, 
        \label{eq_nonlinear_v1}\\[8pt]
        (\varrho_{y}*V_2) - \widehat{(\varrho_{y}*V_2)} + \Delta ta_{i,i}\left(
            A_{2,1}V_1 + A_{2,2}V_2 - \varrho_{*,y}g - \mathcal{L}_2(U)_3\right) 
        &= \mathbf{0}, 
        \label{eq_nonlinear_v2}\\[8pt]
        (\varrho\ast C) -\widehat{(\varrho\ast C)} + \Delta t \alpha_{i,i}\left(
            \varepsilon\Delta_h\left(\frac{1}{\varrho}\Delta_h C\right) -
                2\Delta_h C - \mathcal{M}_2(\tilde{C})\tilde{C}\right) 
        &= \mathbf{0}.
        \label{eq_nonlinear_c}
    \end{align}
\end{subequations}
where the $\mathcal{L}_2$ operator depends only on $C$. 
Notice that $\varrho$ is explicitly given by \eqref{eq_nonlinear_rho}, specifically,
\begin{equation*}
    \varrho = \varrho^n + \Delta t\sum_{j<i}\!\alpha_{i,j}\mathcal{C}(\tilde{U}^{(j)})_1.
\end{equation*}
Therefore, we can substitute $\varrho$ into \eqref{eq_nonlinear_c}, obtaining a solvable linear system of $M^2$ equations for $C$ taking the form of
\begin{equation}\label{eq_system_c}
    \begin{split}
        &\left(
            D(\varrho) - 2\Delta t\alpha_{i,i}\Delta_h + 
            \Delta t\alpha_{i,i}\varepsilon\Delta_hD(\varrho)^{-1}\Delta_h\right)C 
        = 
        \left(\varrho\ast C\right)^{n} 
        \\[11pt]
        &\qquad
        +\Delta t\sum_{j<i}\!\alpha_{i,j}\mathcal{L}(\tilde{U}^{(j)}, U^{(j)})_4 +
        \Delta t\alpha_{i,i}\left(
            \tilde{\mathcal{C}}(\tilde{U})_4 + \mathcal{M}_2(\tilde{C})\tilde{C}
        \right).
    \end{split}
\end{equation}
Notice that the coefficient matrix in \eqref{eq_system_c} is invertible provided that $\varrho_k>0$ for all $k=1,\cdots,M^2$, since,
due to the convex splitting stated in Section \ref{section_eyre_splitting}, the coefficient matrix is symmetric and positive definite.

Once $\varrho$ and $C$ are already known, we can solve the remaining linear system for the velocity components $V_1$ and $V_2$ obtained from \eqref{eq_nonlinear_v1}-\eqref{eq_nonlinear_v2} of $2M(M-1)$ equations:
\begin{equation}\label{eq_system_v}
    \begin{split}
        &\left(
        \begin{split}
            \begin{bmatrix}
                D(\varrho_{*,x})&\mathbf{0}\\[5pt]
                \mathbf{0}&D(\varrho_{*,y})
            \end{bmatrix}
            + \Delta t\alpha_{i,i}
            \begin{bmatrix}
                A_{1,1} & A_{1,2} \\[5pt]
                A_{2,1} & A_{2,2}
            \end{bmatrix}
        \end{split}
        \right)
        \begin{bmatrix}
            V_1 \\[5pt]
            V_2
        \end{bmatrix}
        =
        \begin{bmatrix}
            {(\varrho_{x}*V_1)^{n}} \\[5pt] {(\varrho_{y}*V_2)^{n}}
        \end{bmatrix}
        \\[11pt]
        &\qquad
        +\Delta t\sum_{j<i}\!\alpha_{i,j}
        \begin{bmatrix}
            \mathcal{L}(\tilde{U}^{(j)}, U^{(j)})_2 \\[5pt]
            \mathcal{L}(\tilde{U}^{(j)}, U^{(j)})_3
        \end{bmatrix}
        +\Delta\alpha_{i,i}
        \begin{bmatrix}
            \mathcal{L}_2(U)_2 \\[5pt]
            \mathcal{L}_1(U)_3 + \mathcal{L}_2(U)_3
        \end{bmatrix}.
    \end{split}
\end{equation}
For analyzing the coefficient matrix in this system we have the following property.
\begin{prop}
    Consider the matrices defined in \eqref{vel_matrices}.
    For $\nu >0, \lambda \geq 0$ the block matrix
    \begin{align*}
    {A} = \begin{bmatrix}  
            {A}_{1,1} & {A}_{1,2} \\ {A}_{2,1} & {A}_{2,2} 
            \end{bmatrix},  
    \end{align*}
    is symmetric and positive definite. 
\end{prop}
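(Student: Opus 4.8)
The plan is to write $A$ as a positive combination of two manifestly symmetric pieces, one of which is already positive definite, by recognizing the blocks in \eqref{vel_matrices} as a discretization of $-\nu\Delta-(\nu+\lambda)\nabla\operatorname{div}$ acting on $(v_1,v_2)$. Splitting $2\nu+\lambda=\nu+(\nu+\lambda)$ and regrouping, I expect the identity
$$A=\nu\,\mathcal A_\Delta+(\nu+\lambda)\,\mathcal D^{T}\mathcal D,$$
where $\mathcal D=\big[\,I_M\otimes D_M\;\big|\;D_M\otimes I_M\,\big]$ (the discrete divergence from the two velocity grids to cell centres) and
$$\mathcal A_\Delta=\begin{bmatrix}I_M\otimes(D_M^{T}D_M)+(D_{M+1}^{T}D^{\ast}_{M+1})\otimes I_{M-1}&0\\[4pt]0&(D_M^{T}D_M)\otimes I_M+I_{M-1}\otimes(D_{M+1}^{T}D^{\ast}_{M+1})\end{bmatrix}$$
is the block-diagonal discrete vector Laplacian. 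Establishing this identity is a Kronecker-algebra bookkeeping: one checks $\mathcal D_1^{T}\mathcal D_1=I_M\otimes(D_M^{T}D_M)$, $\mathcal D_1^{T}\mathcal D_2=D_M\otimes D_M^{T}$, $\mathcal D_2^{T}\mathcal D_2=(D_M^{T}D_M)\otimes I_M$, matches these against the $(\nu+\lambda)$-parts of $A_{1,1},A_{1,2},A_{2,2}$, and collects the remaining $\nu$-parts into $\mathcal A_\Delta$. Equivalently, and more transparently, I would just evaluate the quadratic form and obtain $\langle V,AV\rangle=(\nu+\lambda)\|\operatorname{div}_h v\|_F^{2}+\nu\big(\|D_Mv_1\|_F^{2}+\|v_1D_{M+1}^{T}\|_F^{2}+\|v_2D_M^{T}\|_F^{2}+\|D_{M+1}v_2\|_F^{2}\big)+\text{(nonnegative boundary terms)}$.

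The one genuinely non-obvious ingredient, and the step I expect to be the main obstacle, is the symmetry of the elementary building block $D_N^{T}D_N^{\ast}$; the matrix $D_N^{\ast}$ on its own is not structured conveniently. The key observation is that the correction $D_N^{\ast}-D_N=\tfrac1h(\mathbf e_1e_1^{T}-\mathbf e_Ne_{N-1}^{T})$ is supported only on the first and last rows, and those rows of $D_N$ are precisely $\tfrac1h e_1^{T}$ and $-\tfrac1h e_{N-1}^{T}$; hence $D_N^{T}(D_N^{\ast}-D_N)=\tfrac1{h^{2}}(e_1e_1^{T}+e_{N-1}e_{N-1}^{T})$ is diagonal. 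Therefore $D_N^{T}D_N^{\ast}=D_N^{T}D_N+\tfrac1{h^{2}}(e_1e_1^{T}+e_{N-1}e_{N-1}^{T})$, which is symmetric and, since the columns of $D_N$ are the linearly independent edge vectors $\tfrac1h(\mathbf e_k-\mathbf e_{k+1})$ of a path graph, even positive definite. I would record this as a short lemma and apply it with $N=M$ and $N=M+1$.

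With the lemma available, symmetry of $A$ is immediate: each diagonal block of $A$ is a sum of Kronecker products of symmetric matrices ($I$, $D_M^{T}D_M$, $D_{M+1}^{T}D^{\ast}_{M+1}$), and $A_{2,1}=(\nu+\lambda)D_M^{T}\otimes D_M=\big((\nu+\lambda)D_M\otimes D_M^{T}\big)^{T}=A_{1,2}^{T}$; alternatively this is read off from the decomposition, since both $\mathcal A_\Delta$ and $\mathcal D^{T}\mathcal D$ are symmetric. For positive definiteness I note that $\mathcal D^{T}\mathcal D\succeq 0$ automatically and $\nu>0$, so it suffices to show $\mathcal A_\Delta\succ 0$. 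Each of its diagonal blocks has the form $I\otimes S+T\otimes I$ with $S=D_M^{T}D_M$ and $T=D_{M+1}^{T}D^{\ast}_{M+1}$ both symmetric positive definite (by the lemma); taking $Sv=\mu v$ and $Tw=\theta w$, the vectors $w\otimes v$ are eigenvectors with eigenvalues $\mu+\theta>0$ and span the whole space, so each block, hence $\mathcal A_\Delta$, is positive definite. Finally $\nu+\lambda>0$ because $\nu>0$ and $\lambda\ge 0$, so $A=\nu\mathcal A_\Delta+(\nu+\lambda)\mathcal D^{T}\mathcal D$ is symmetric positive definite. If one instead prefers the quadratic-form route, strict positivity follows at the end because $\langle V,AV\rangle=0$ forces $D_Mv_1=0$ and $v_2D_M^{T}=0$, and full column rank of $D_M$ then yields $v_1=v_2=0$.
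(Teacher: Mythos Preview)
Your proof is correct and follows essentially the same approach as the paper: both decompose $A=\nu P+(\nu+\lambda)Q$ (your $\mathcal A_\Delta$ and $\mathcal D^{T}\mathcal D$ are exactly the paper's $P$ and $Q$), both establish the key lemma that $D_{N}^{T}D_{N}^{\ast}=D_{N}^{T}D_{N}+h^{-2}(e_{1}e_{1}^{T}+e_{N-1}e_{N-1}^{T})$ is symmetric positive definite, and both conclude by showing $P$ is SPD and $Q$ is PSD. The only presentational difference is that you identify $Q=\mathcal D^{T}\mathcal D$ as a Gram matrix of the discrete divergence up front, whereas the paper reaches the equivalent statement $V^{T}QV=\mathrm{tr}\big((D_{M}u+vD_{M}^{T})^{T}(D_{M}u+vD_{M}^{T})\big)\ge 0$ by a direct trace computation.
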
 
\begin{proof}
  The matrix $A$ can be written as $A = \nu P + (\nu + \lambda) Q$, where 
  \begin{align*}
    &P=\begin{bmatrix}
      I_{M}\otimes ({D}_{M} ^{T}{D}_{M})
             +
         ({D}_{M+1} ^{T}{D}^*_{M+1})\otimes {I}_{M-1},
      &
        \boldsymbol{0}\\[4pt]
        \boldsymbol{0}
      & ({D}_{M} ^{T}D_{M})\otimes {I}_{M}    + 
        {I}_{M-1}\otimes ({D}_{M+1} ^{T}  {D}_{M+1}^*)
     \end{bmatrix},\\[8pt]
    &Q=
    \begin{bmatrix}
      I_{M}\otimes ({D}_{M} ^{T}{D}_{M})
      &
        {D}_{M}\otimes {D}_{M}^T,\\[4pt]
      {D}_{M}^T\otimes {D}_{M}
      & ({D}_{M} ^{T}D_{M})\otimes {I}_{M}   
    \end{bmatrix},\\[8pt]
    &{D}_{M} ^{T}{D}_{M}=
      h^{-2}    \begin{bmatrix}
      2&-1&0&\dots&0\\
      -1&2&-1&\dots&0\\
      \hdotsfor{5}\\
      0&\dots&-1&2&-1\\
      0&\dots&0&-1&2
    \end{bmatrix}                   
    \in\mathbb{R}^{(M-1)\times(M-1)},
    \\[8pt]
    &{D}_{M+1} ^{T}{D}_{M+1}^*=
      h^{-2}\begin{bmatrix}
      2&-1&0&\dots&0\\
      -1&2&-1&\dots&0\\
      \hdotsfor{5}\\
      0&\dots&-1&2&-1\\
      0&\dots&0&-1&2
            \end{bmatrix}          +
      h^{-2}
    \begin{bmatrix}
      1&0&0&\dots&0\\
      0&0&0&\dots&0\\
      \hdotsfor{5}\\
      0&\dots&0&0&0\\
      0&\dots&0&0&1
    \end{bmatrix}                   
    \in\mathbb{R}^{M\times M}.
  \end{align*}
  The matrix $D_{M}^TD_{M}$, the standard Laplacian with Dirichlet conditions on a uniformly spaced grid with  $M-1$ points on $(0,1)$, is symmetric and positive definite and so is 
  ${D}_{M+1}^{T}{D}_{M+1}^*$, being the sum of a symmetric positive definite matrix and a symmetric positive semidefinite (nonnegative diagonal) matrix. 
  Since Kronecker products of symmetric and positive definite matrices are also symmetric and positive definite, we deduce that $P$ is symmetric and positive definite.

  We prove now that $Q$ is (symmetric) positive semidefinite: Let $u\in\mathbb{R}^{(M-1)\times M}$,
  $v\in\mathbb{R}^{M\times (M-1)}$, then
  \begin{align*}
    &\begin{bmatrix}
      \operatorname{vec}(u)^T & \operatorname{vec}(v)^T
    \end{bmatrix}               
    \begin{bmatrix}
      I_{M}\otimes ({D}_{M} ^{T}{D}_{M})
      &
        {D}_{M}\otimes {D}_{M}^T\\[4pt]
      {D}_{M}^T\otimes {D}_{M}
      & ({D}_{M} ^{T}D_{M})\otimes {I}_{M}   
    \end{bmatrix}
    \begin{bmatrix}
      \operatorname{vec}(u)\\[4pt] \operatorname{vec}(v)
    \end{bmatrix}
    \\[8pt]
    &=
    \begin{bmatrix}
      \operatorname{vec}(u)^T & \operatorname{vec}(v)^T
    \end{bmatrix}               
    \begin{bmatrix}
      \operatorname{vec}\Big({D}_{M} ^{T}{D}_{M}u
      +
        {D}_{M}^Tv {D}_{M}^T\Big)\\[4pt]
      \operatorname{vec}\Big({D}_{M}u {D}_{M}
      +v({D}_{M} ^{T}D_{M}\Big)
    \end{bmatrix}
    \\[8pt]
    &=
      \operatorname{vec}(u)^T\operatorname{vec}\Big({D}_{M} ^{T}{D}_{M}u
      +
        {D}_{M}^Tv {D}_{M}^T\Big)+
       \operatorname{vec}(v)^T
      \operatorname{vec}\Big({D}_{M}u {D}_{M}
      +v{D}_{M} ^{T}D_{M}\Big)
    \\[8pt]
    &=
      \text{tr}\Big(u^T\big({D}_{M} ^{T}{D}_{M}u
      +
        {D}_{M}^Tv {D}_{M}^T\big)\Big)+
      \text{tr}\Big(v^T
      \big({D}_{M}u {D}_{M}
      +v{D}_{M} ^{T}D_{M}\big)\Big)
    \\[8pt]
    &=
      \text{tr}(u^T{D}_{M} ^{T}{D}_{M}u)
      +\text{tr}(u^T       {D}_{M}^Tv {D}_{M}^T)+
      \text{tr}(v^T   {D}_{M}u {D}_{M})
      +\text{tr}(v^Tv{D}_{M} ^{T}D_{M})\\[8pt]
    &=
      \text{tr}(u^T{D}_{M} ^{T}{D}_{M}u)
      +\text{tr}(u^T       {D}_{M}^Tv {D}_{M}^T)+
      \text{tr}({D}_{M}v^T   {D}_{M}u )
      +\text{tr}({D}_{M}v^Tv{D}_{M} ^{T})\\[8pt]
    &=\text{tr}\Big((D_Mu+vD_M^T)^T(D_Mu+vD_M^T)\Big) \geq 0,
  \end{align*}
  where $\text{tr}(A)$ denotes the trace of the matrix $A$, which has the properties: 
  $$\operatorname{vec}(A)^T\operatorname{vec}(B)=\text{tr}(A^TB)=\text{tr}(B^TA),$$ 
  for $A, B\in\mathbb{R}^{p\times q}$, 
  $\text{tr}(A)=\text{tr}(A^T)$ for all square matrices and $\text{tr}(A^TA)\geq 0$ for all matrices $A$.

  Therefore, $A$ is symmetric and positive definite.
\end{proof}
The coefficient matrix in \eqref{vel_matrices} is therefore symmetric and positive definite provided that $\varrho_k\geq0$ for every $k=1,\cdots,M^2$.

In \cite{mulet_24}, it was shown that conjugate gradient method and multigrid V-cycle strategy with Gauss-Seidel smoothings serve as effective solvers for systems \eqref{eq_system_c} and \eqref{eq_system_v}, respectively.

\subsection{Time-Step selection}\label{section_time_step}

The time step stability condition is determined by the convective part of system \eqref{eq_compressible_chns_2D}. 
Specifically, for some constant CFL$<1$,
\begin{equation}\label{time_step_cond}
    \Delta t = \text{CFL}\frac{\Delta x}{cs},
\end{equation}
where the maximum characteristic speeds are given by
\begin{equation*}
    cs = \max \left\{
        \left| V^{(i)}_{k,j} \right| + \sqrt{p^\prime(\varrho^{(i)}_j)} : i = 1,\cdots,s,\enspace k = 1, 2,\enspace j=1,\cdots,M^2
    \right\}.
\end{equation*}
In the current work, we do not provide a theoretical proof of the positivity-preserving of $\rho$ and bound-preserving of $c$ of the proposed scheme.
Nevertheless, in our numerical experiments we have observed that if the initial density $\rho_0$ is positive and the initial order parameter $c_0$ lies within the physical bounds $[-1,1]$, then $\rho$ remains positive and $c$ stays close to the physical bounds throughout the simulation.

\section{Numerical experiments}\label{section_numerical_experiments}
In this section, we present several numerical tests to validate that the proposed IMEX scheme achieves second-order accuracy.
We also show that the number of time steps is consistent with the constraints imposed by the purely convective subsystem,
showing that the chosen CFL value guarantees mass conservation and region boundedness of the order parameter $c$.

All the experiments are performed in a two-dimensional setting.
The adiabatic exponent is set to $\frac53$ and the CFL number to $0.4$.
Unless specified otherwise, the remaining parameters are set to
\begin{equation*}
    \nu = 1, \quad \lambda = 0.1, \quad \varepsilon = 10^{-4}, \quad g = -10.
\end{equation*}

All the experiments have been conducted with a MATLAB R2024a implementation on a Linux machine running on 32 core of an AMD EPYC 7282.

\subsection{Order test}
To verify the second-order accuracy of the proposed IMEX scheme, 
we consider a manufactured solution in the domain $\Omega = (0,1)^2$ verifying the boundary conditions \eqref{bdry_cond}.
In particular, the specified solution is given by
\begin{equation*}
    \begin{split}
        \rho(x,y,t) &= 1.25 + 0.1 \cos(2\pi x)\cos(\pi y)(t+1),\\
        v_1(x,y,t) &= \sin(\pi x)\sin(\pi y)(1 - 2t^2),\\
        v_2(x,y,t) &= \sin(\pi x)\sin(2\pi y)(1 + t^2),\\
        c(x,y,t) &= 0.75 + 0.1\cos(\pi x)\cos(\pi y)(1-t).
    \end{split}
\end{equation*}
The global error and order of convergence are computed at time $T = 0.01$ by using the formulas, respectively,
\begin{equation*}
    e_M = \frac{1}{M^2}\sum_{k,i,j}\!|u^{n}_{k,i,j} - u_k(\mathbf{x}_{i,j}, T)|,
    \quad
    \text{EOC}_M = \log_2\left(\frac{e_M}{e_{2M}}\right),
\end{equation*}
for $M=2^{k}$ for $k=3,\ldots,10$.
We choose the following Butcher tableaus 
\begin{equation*}
\begin{aligned}
    &\text{EE--IE:}
    &&
    \begin{array}{c|c}
        0 & 0  \\
        \hline
          & 1 \\
    \end{array}
    &&
    \begin{array}{c|c}
        1 & 1 \\
        \hline
          & 1 \\
    \end{array}
    \\[14pt]
    &{}^\ast\text{DIRKSA:}
    &&
    \begin{array}{c|cc}
        0     & 0     & 0 \\
        1+s   & 1+s   & 0 \\
        \hline
              & s     & 1-s \\
    \end{array}
    &&
    \begin{array}{c|cc}
        1-s & 1-s & 0     \\
        1   &  s  & 1-s   \\
        \hline
            &  s  & 1-s   \\
    \end{array}
    ,\qquad s = \frac{1}{\sqrt{2}}.
\end{aligned}
\end{equation*}
The EE-IE scheme is a first-order IMEX method, while the
$^\ast$-DIRKSA is the only second-order stiffly accurate IMEX scheme with $\alpha_{i,j}\geq0$.
Table \ref{table_orders_convergence_2D} displays the $L_1$ errors and experimental order of convergence for both IMEX schemes.
We also include the errors reported in \cite{mulet_24}, where collocated meshes are used for the same setup. 
Although orders are roughly the same, it can be observed that staggered meshes outperform collocated meshes.
\begin{table}[h]
    \centering
    \resizebox{\textwidth}{!}{
    \begin{tabular}{c c c c c c | c c c c c c}
    \hline
    \multicolumn{6}{c|}{Staggered meshes} & \multicolumn{6}{c}{Collocated grids} \\
    \hline
    \multicolumn{3}{c}{$^\ast$-DIRKSA} & \multicolumn{3}{c|}{EE-IE}
    & \multicolumn{3}{c}{$^\ast$-DIRKSA} & \multicolumn{3}{c}{EE-IE} \\
    \hline
    $M$ & $e_M$ & EOC$_M$ & $M$ & $e_M$ & EOC$_M$
    & $M$ & $e_M$ & EOC$_M$ & $M$ & $e_M$ & EOC$_M$ \\
    \hline
    8   & 1.5423e-02 & 2.02  & 8   & 1.3302e-02 & 2.11 & 8   & 1.9828e-02 & 2.21 & 8   & 1.4989e-02 & 2.25 \\
    16  & 3.8084e-03 & 2.01 & 16  & 3.0819e-03 & 1.77 & 16  & 4.2964e-03 & 2.04 & 16  & 3.1522e-03 & 1.76 \\
    32  & 9.4660e-04 & 2.00 & 32  & 9.0591e-04 & 1.78 & 32  & 1.0422e-03 & 1.97 & 32  & 9.3289e-04 & 1.77 \\
    64  & 2.3639e-04 & 2.00 & 64  & 2.6441e-04 & 1.71 & 64  & 2.6617e-04 & 1.97 & 64  & 2.7460e-04 & 1.73 \\
    128 & 5.9070e-05 & 2.00 & 128 & 8.0562e-05 & 1.58 & 128 & 6.7802e-05 & 1.98 & 128 & 8.2857e-05 & 1.59 \\
    256 & 1.4766e-05 & --- & 256 & 2.6868e-05 & --- & 256 & 1.7148e-05 & ---   & 256 & 2.7457e-05 & ---   \\
    \hline
    \end{tabular}}
    \label{table_orders_convergence_2D}
    \caption{
    $L_1$ errors and experimental order of convergence for the $^\ast$-DIRKSA and EE-IE IMEX schemes
    for the two-dimensional case using a prescribed solution.
    The columns corresponding to staggered meshes refers to the proposed scheme, while the other corresponds to the one described in \cite{mulet_24}.
    }
\end{table}

We henceforth use the $^\ast$-DIRKSA scheme for the remaining numerical tests.

\subsection{Test 1, Test 2 and Test 3}
The following tests aim to show that the $^\ast$-DIRKSA scheme is stable when the order parameter $c$ lies initially both in the unstable region $\left(-\frac{1}{\sqrt{3}},\frac{1}{\sqrt{3}}\right)$ and stable region (see \cite{Elliott89,mulet_24}).
We also show the process of spinodal decomposition.
To this end, mass conservation errors and the minimum and maximum values of $c$ are illustrated.
\subsubsection{Test 1}
We consider the following initial data satisfying boundary conditions
\eqref{bdry_cond}, with $c_0$ taking values in the unstable region
$(-\frac{1}{\sqrt{3}}, \frac{1}{\sqrt{3}})$,
\begin{equation*}
    \begin{split}
        \rho_0(x,y) &= 1.25 + 0.1 \cos(2\pi x)\cos(\pi y),\\
        \mathbf{v}_0(x,y) &= (\sin(\pi x)\sin(\pi y), \sin(\pi x)\sin(2\pi y)),\\
        c_0(x,y) &=  0.1\cos(\pi x)\cos(\pi y).
    \end{split}
\end{equation*}
Initially, at Figure \ref{fig_test1_initial} can be observed that $c_0$ takes values in the unstable region.
\begin{figure}[h!]
    \centering
    \begin{minipage}[b]{0.45\textwidth}
        \centering
        {\small $T=0$} \\[0.3em]
        \includegraphics[width=\textwidth]{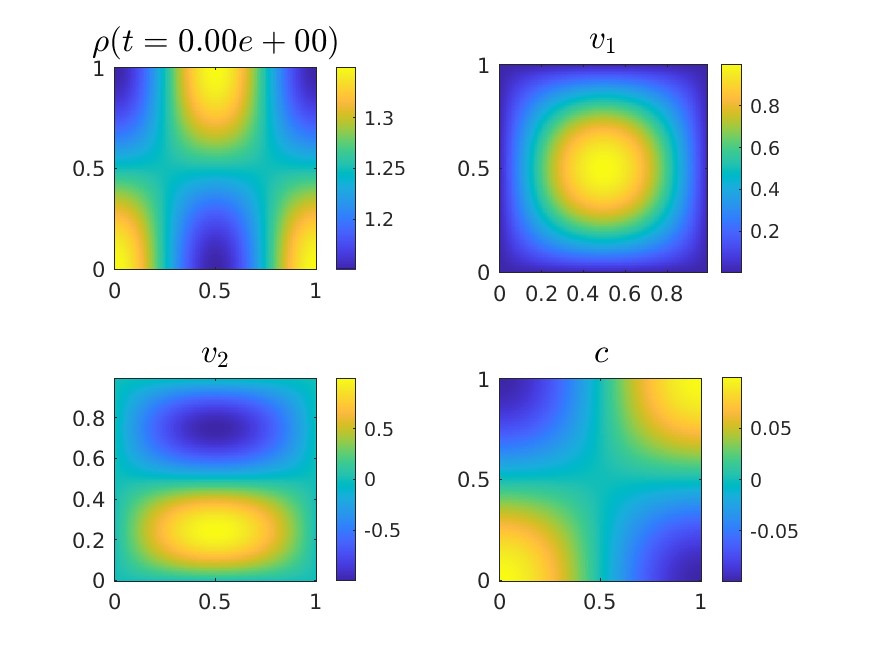}
    \end{minipage}
    \hfill
    \begin{minipage}[b]{0.45\textwidth}
        \centering
        {\small $T=0.1$} \\[0.3em]
        \includegraphics[width=\textwidth]{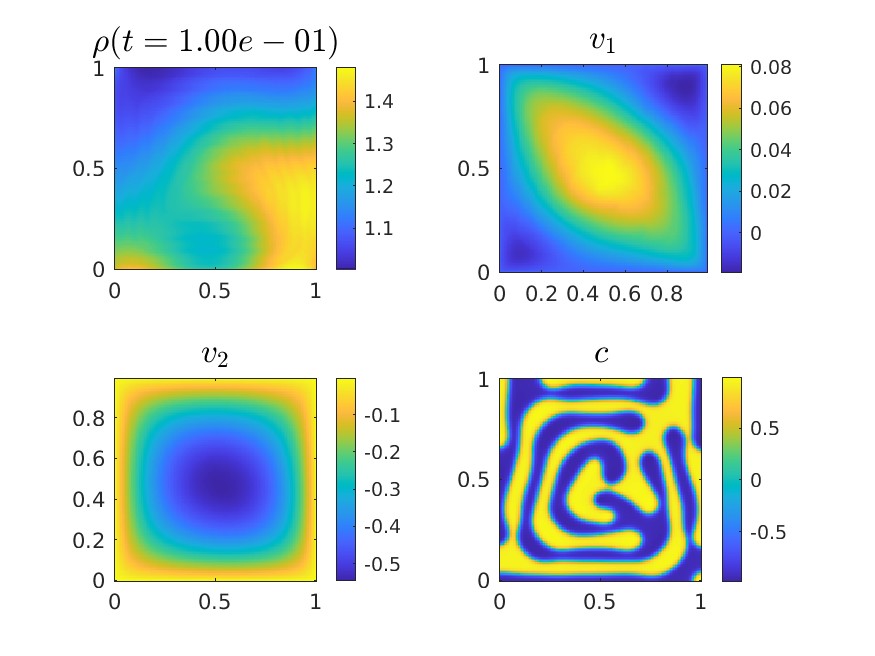}
    \end{minipage}
    \caption{Results for Test 1. 
    Initially, $c$ lies in the unstable region.
    Therefore, phase separation occurs at times $T=0, 0.1$.}
    \label{fig_test1_initial}
\end{figure}
Consequently, at later times, phase separation occurs \cite{CahnHilliard59} forming complex structures, while the density becomes higher at the bottom due to the gravitational effect, as shown in Figures \ref{fig_test1_initial}, \ref{fig_test1_middle}.
\begin{figure}[h!]
    \centering
    \begin{minipage}[b]{0.45\textwidth}
        \centering
        {\small $T=0.3$} \\[0.3em]
        \includegraphics[width=\textwidth]{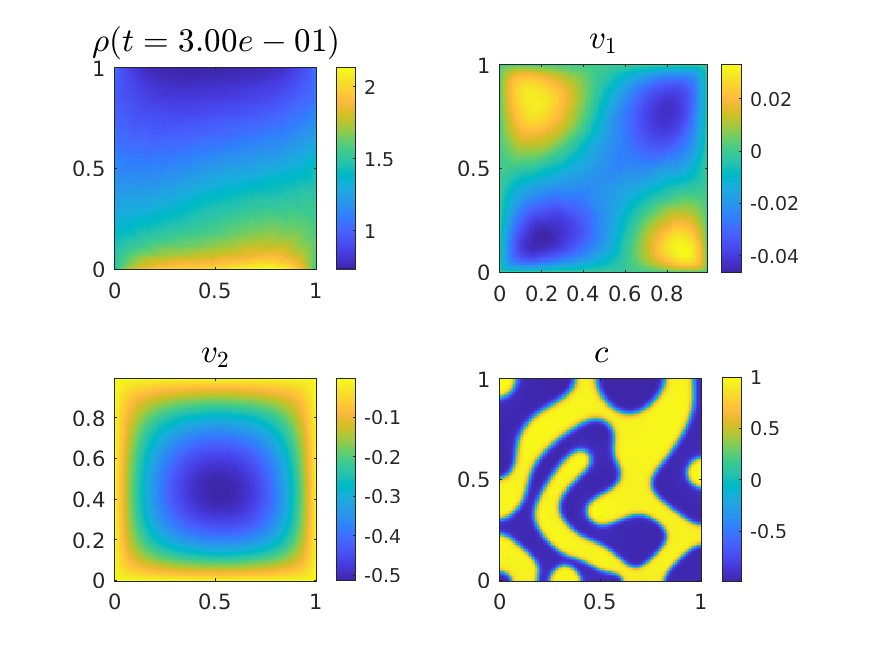}
    \end{minipage}
    \hfill
    \begin{minipage}[b]{0.45\textwidth}
        \centering
        {\small $T=0.5$} \\[0.3em]
        \includegraphics[width=\textwidth]{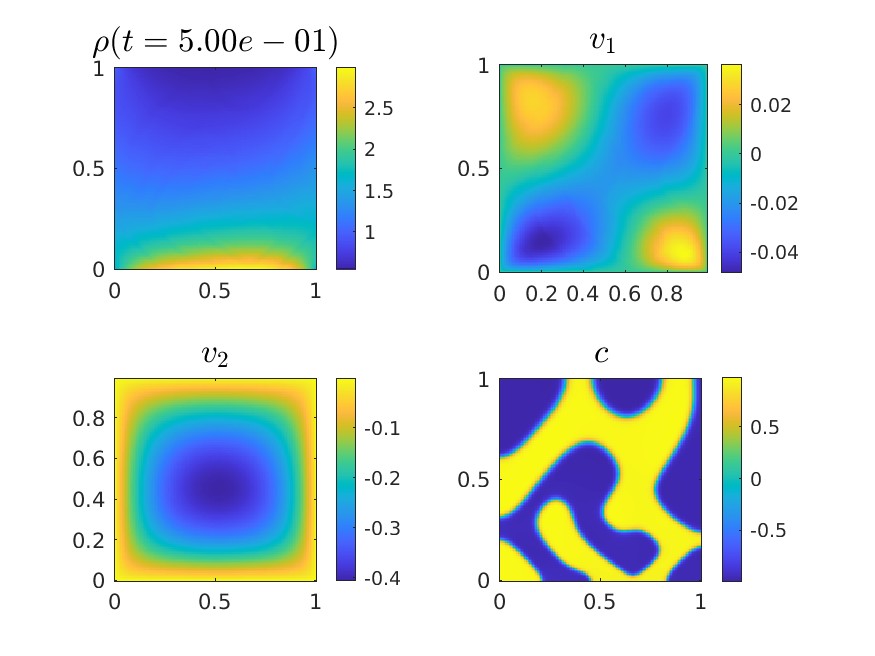}
    \end{minipage}
    \caption{Results for Test 1. 
    The process of phase separation continues, while the density tends to increase at the bottom.}
    \label{fig_test1_middle}
\end{figure}

As time continues evolving, the density tends to increase at the bottom and decrease at the top, 
while the order parameter $c$ tends to separate into two distinct regions, as illustrated in Figure \ref{fig_test1_final}.
Such a process for the order parameter at final times is known as nucleation.
\begin{figure}[h!]
    \centering
    \begin{minipage}[b]{0.45\textwidth}
        \centering
        {\small $T=0.7$} \\[0.3em]
        \includegraphics[width=\textwidth]{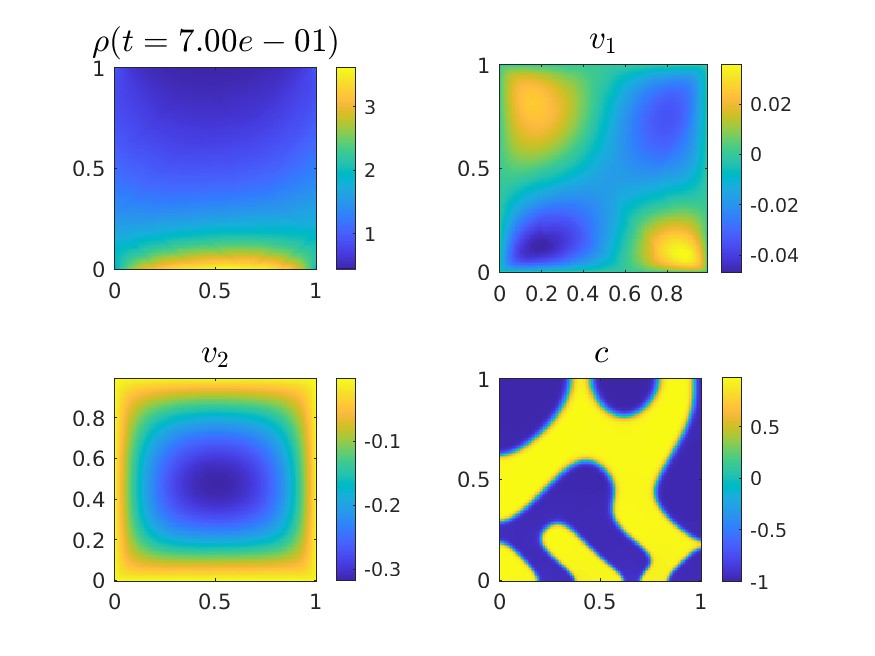}
    \end{minipage}
    \hfill
    \begin{minipage}[b]{0.45\textwidth}
        \centering
        {\small $T=1$} \\[0.3em]
        \includegraphics[width=\textwidth]{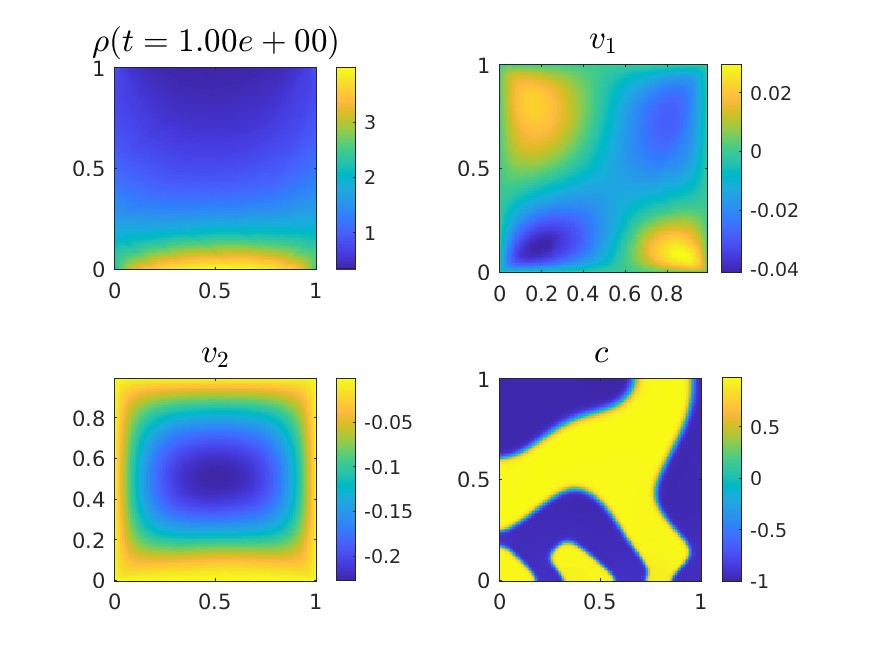}
    \end{minipage}
    \caption{Results for Test 1. 
    The density is higher at the bottom, while the order parameter $c$ tends to separate into two growing regions, leading to nucleation.}
    \label{fig_test1_final}
\end{figure}

\subsubsection{Test 2}
In this test, $c_0$ is in the stable region, and the initial conditions verifying \eqref{bdry_cond} are
\begin{equation*}
    \begin{split}
        \rho_0(x,y) &= 1.25 + 0.1 \cos(2\pi x)\cos(\pi y),\\
        \mathbf{v}_0(x,y) &= (\sin(\pi x)\sin(\pi y), \sin(\pi x)\sin(2\pi y)),\\
        c_0(x,y) &= 0.75 + 0.1\cos(\pi x)\cos(\pi y).
    \end{split}
\end{equation*}
In this case, $c$ lies initially in the stable region, as shown in Figure \ref{fig_test2_initial}.
Consequently, no phase separation occurs, and the order parameter remains close to its initial state, tending to $\frac34$ (see Figures \ref{fig_test2_middle} and \ref{fig_test2_final}). 
Therefore, the fluid is governed by the compressible Navier-Stokes equations with gravitational forces, as illustrated in Figures \ref{fig_test2_initial}, \ref{fig_test2_middle}, and \ref{fig_test2_final}.
\begin{figure}[h!]
    \centering
    \begin{minipage}[b]{0.45\textwidth}
        \centering
        {\small $T=0$} \\[0.3em]
        \includegraphics[width=\textwidth]{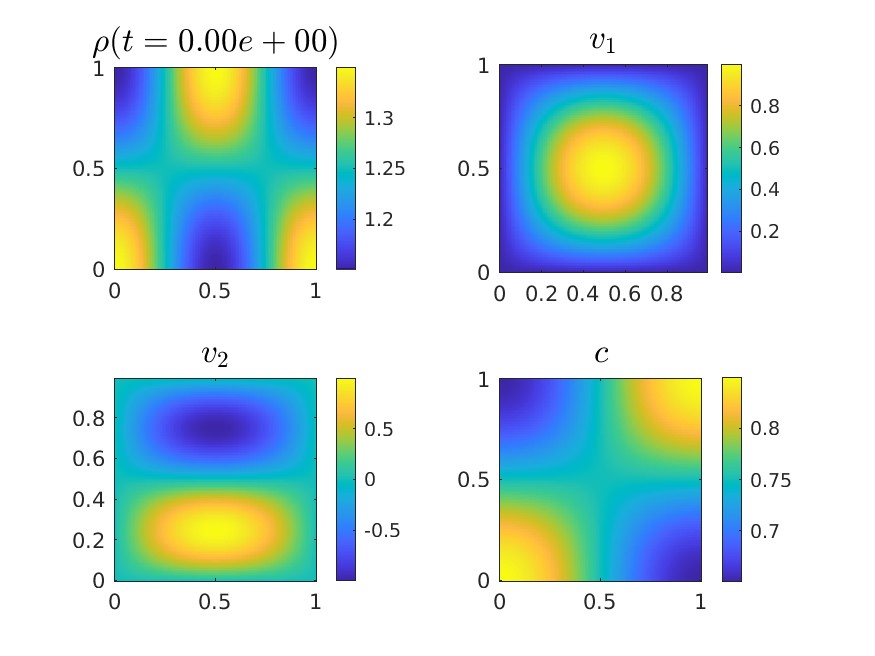}
    \end{minipage}
    \hfill
    \begin{minipage}[b]{0.45\textwidth}
        \centering
        {\small $T=0.1$} \\[0.3em]
        \includegraphics[width=\textwidth]{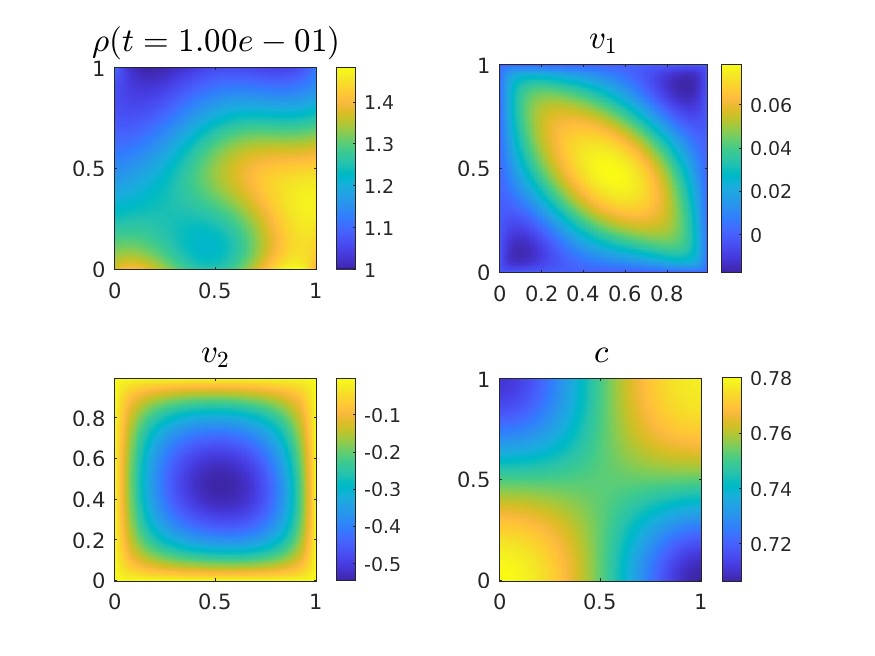}
    \end{minipage}
    \caption{Results for Test 2. Initially, $c$ lies in the stable region starting to tend to $\frac34$.}
    \label{fig_test2_initial}
\end{figure}
\begin{figure}[h!]
    \centering
    \begin{minipage}[b]{0.45\textwidth}
        \centering
        {\small $T=0.3$} \\[0.3em]
        \includegraphics[width=\textwidth]{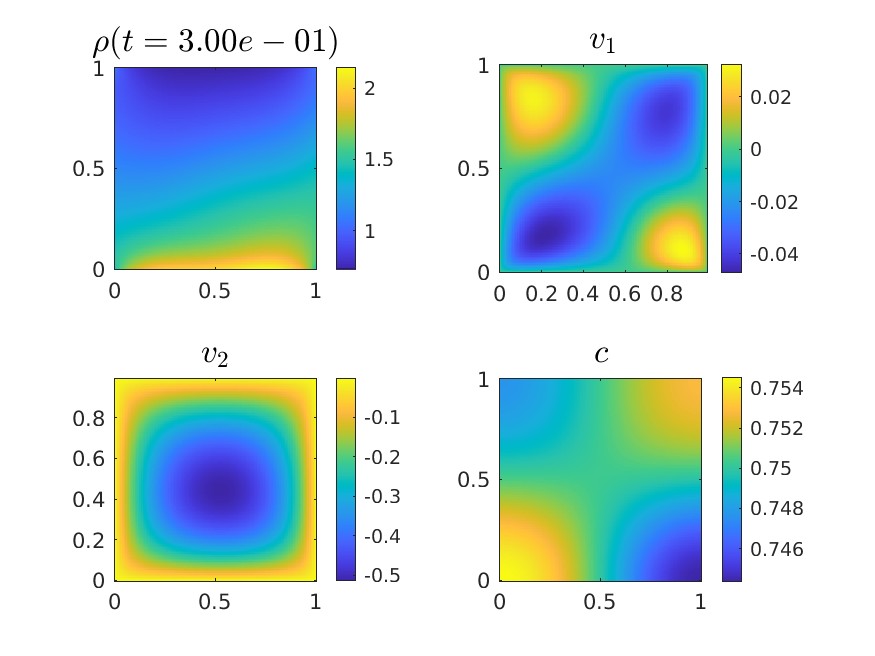}
    \end{minipage}
    \hfill
    \begin{minipage}[b]{0.45\textwidth}
        \centering
        {\small $T=0.5$} \\[0.3em]
        \includegraphics[width=\textwidth]{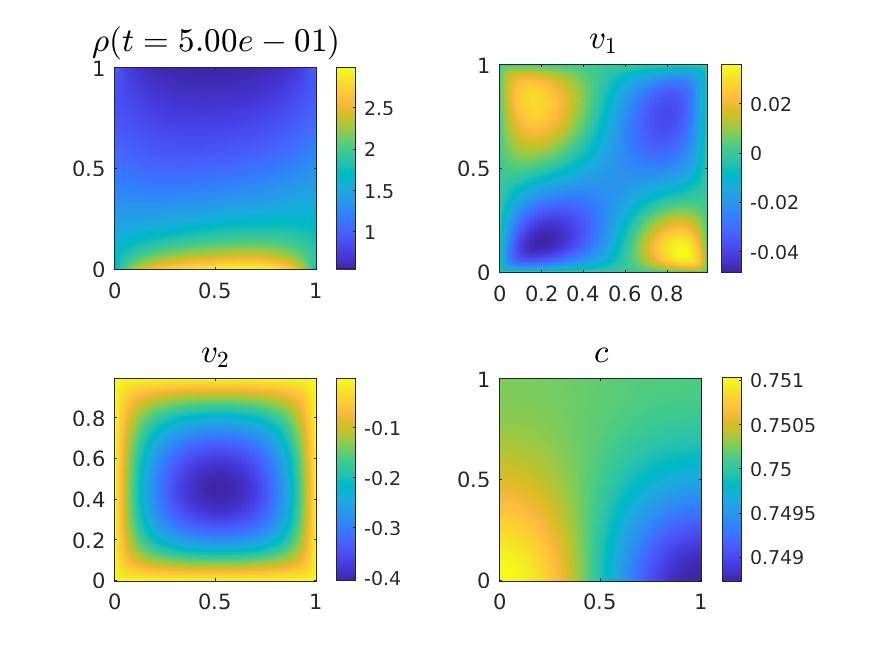}
    \end{minipage}
    \caption{Results for Test 2. The order parameter $c$ remains in the stable region, and at $T=0.5$ it is close to $\frac34$, while the density starts increasing at the bottom.}
    \label{fig_test2_middle}
\end{figure}

\begin{figure}[h!]
    \centering
    \begin{minipage}[b]{0.45\textwidth}
        \centering
        {\small $T=0.7$} \\[0.3em]
        \includegraphics[width=\textwidth]{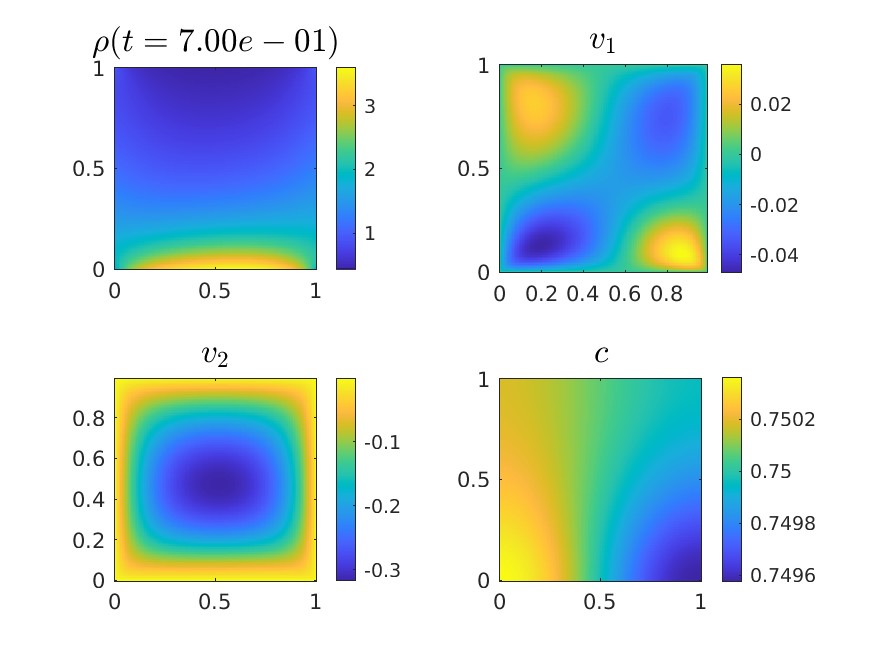}
    \end{minipage}
    \hfill
    \begin{minipage}[b]{0.45\textwidth}
        \centering
        {\small $T=1$} \\[0.3em]
        \includegraphics[width=\textwidth]{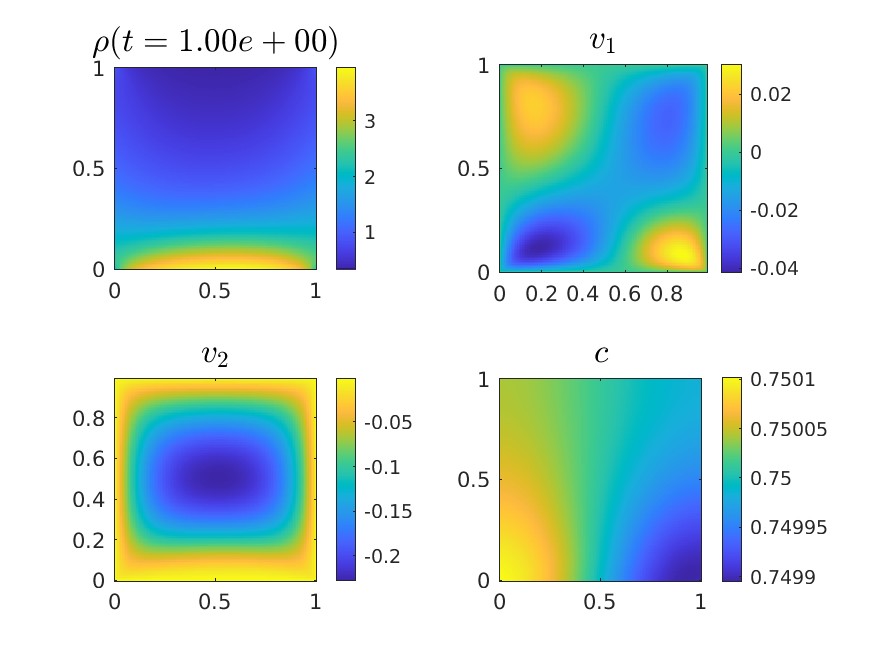}
    \end{minipage}
    \caption{Results for Test 2. The $c$ variable becomes closer to $\frac34$ pointing out that the fluid is governed by the isentropic compressible Navier-Stokes equations.}
    \label{fig_test2_final}
\end{figure}

\subsubsection{Test 3}
This test illustrates the process of spinodal decomposition.
We set $\rho_0=1$, $\mathbf{v}_0 = 0$ and initialize $c_0$ as a normally distributed random sample with mean $0$ and variance $10^{-10}$. 
The viscosity coefficients are chosen as $\nu=10^{-3}$, $\lambda = 10^{-4}$.
\begin{figure}[h]
    \centering
    \begin{minipage}[b]{0.45\textwidth}
        \centering
        {\small $T=0$} \\[0.3em]
        \includegraphics[width=\textwidth]{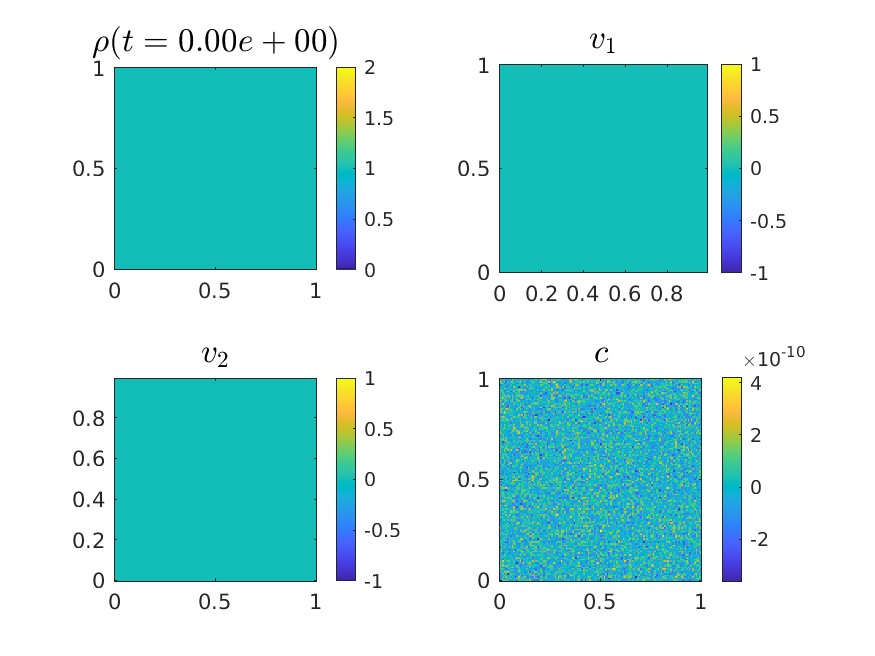}
    \end{minipage}
    \hfill
    \begin{minipage}[b]{0.45\textwidth}
        \centering
        {\small $T=0.1$} \\[0.3em]
        \includegraphics[width=\textwidth]{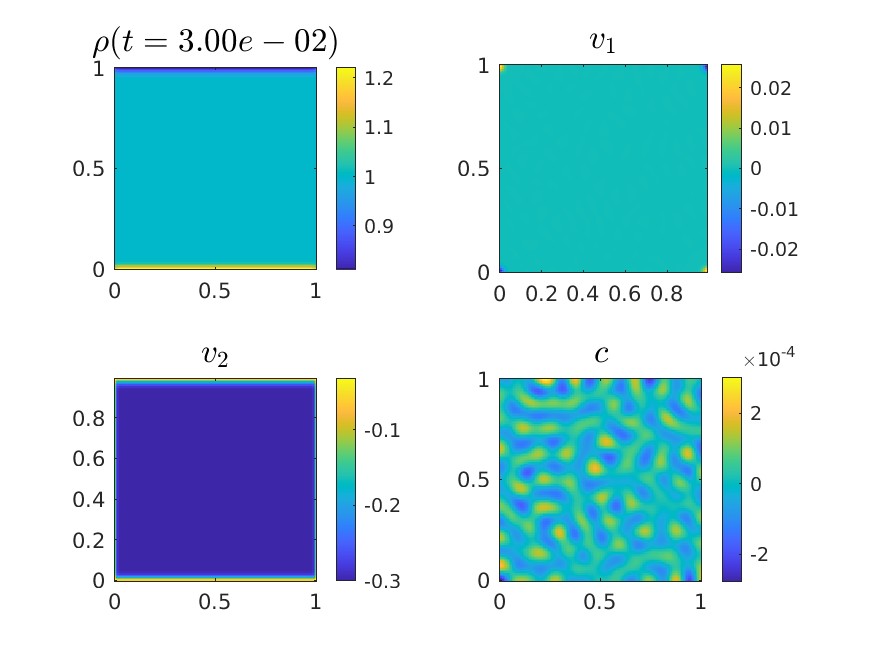}
    \end{minipage}
    \caption{Results for Test 3. Initially, $c$ lies in the unstable region.
    Phase separation starts to occur, for example, at time $T=0.1$.}
    \label{fig_test3_initial}
\end{figure}
For this test, the initial value of $c$ clearly lies in the unstable region, triggering phase separation in the early stages of the simulation and eventually leading to a nucleation regime.
\begin{figure}[h]
    \begin{minipage}[b]{0.45\textwidth}
        \centering
        {\small $T=0.3$} \\[0.3em]
        \includegraphics[width=\textwidth]{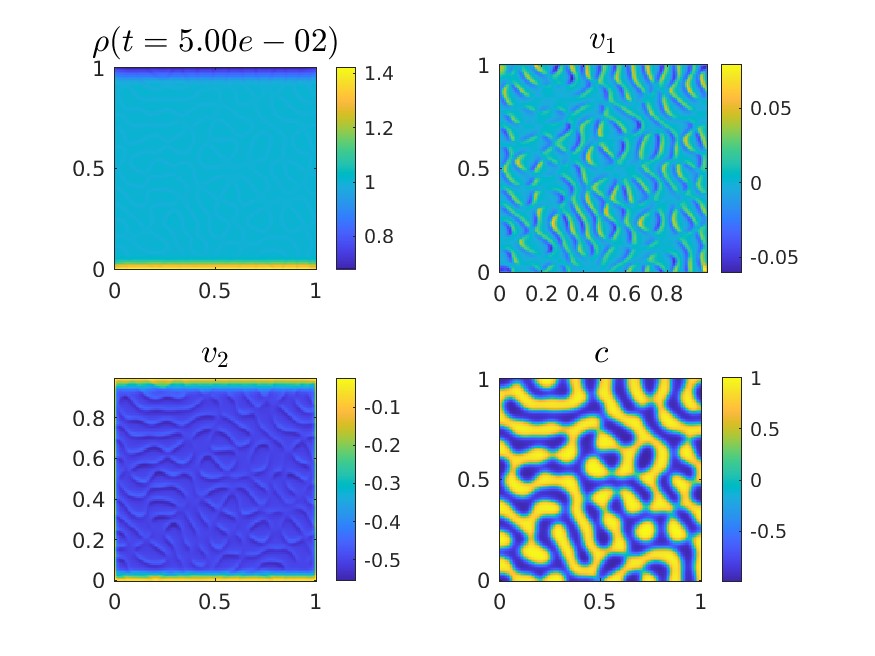}
    \end{minipage}
    \hfill
    \begin{minipage}[b]{0.45\textwidth}
        \centering
        {\small $T=0.5$} \\[0.3em]
        \includegraphics[width=\textwidth]{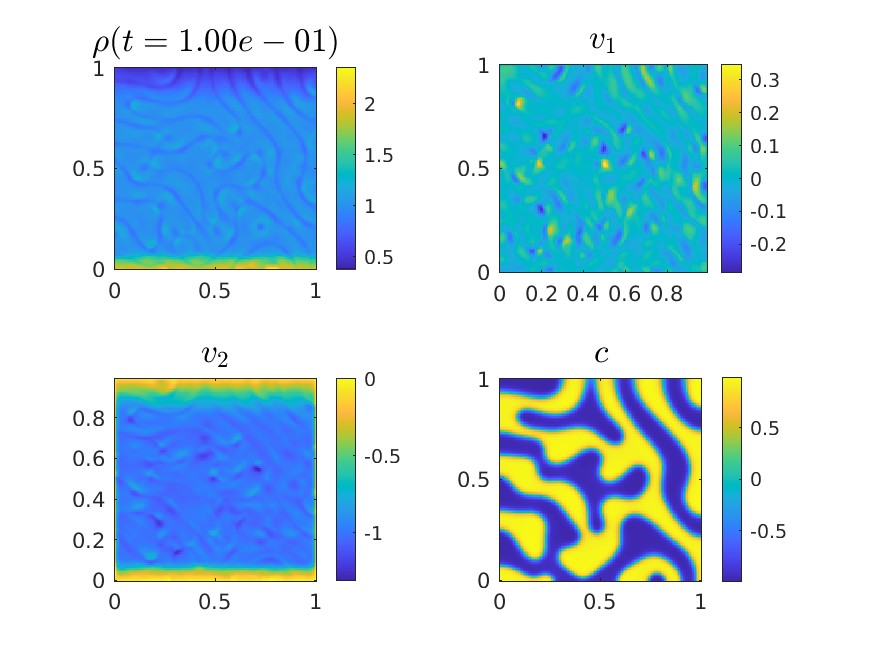}
    \end{minipage}
    \caption{Results for Test 3. 
    Phase separation occurs at times $T=0.3, 0.5$.
    The density becomes higher at the bottom due to gravitational effects.}
    \label{fig_test3_middle}
\end{figure}
\begin{figure}[h]
    \centering
    \begin{minipage}[b]{0.45\textwidth}
        \centering
        {\small $T=0.7$} \\[0.3em]
        \includegraphics[width=\textwidth]{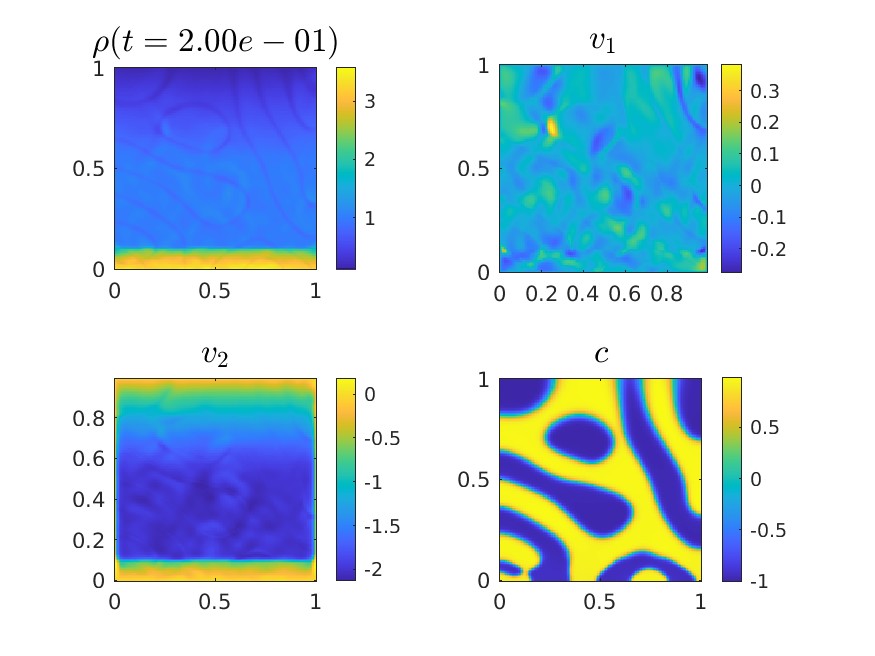}
    \end{minipage}
    \hfill
    \begin{minipage}[b]{0.45\textwidth}
        \centering
        {\small $T=1$} \\[0.3em]
        \includegraphics[width=\textwidth]{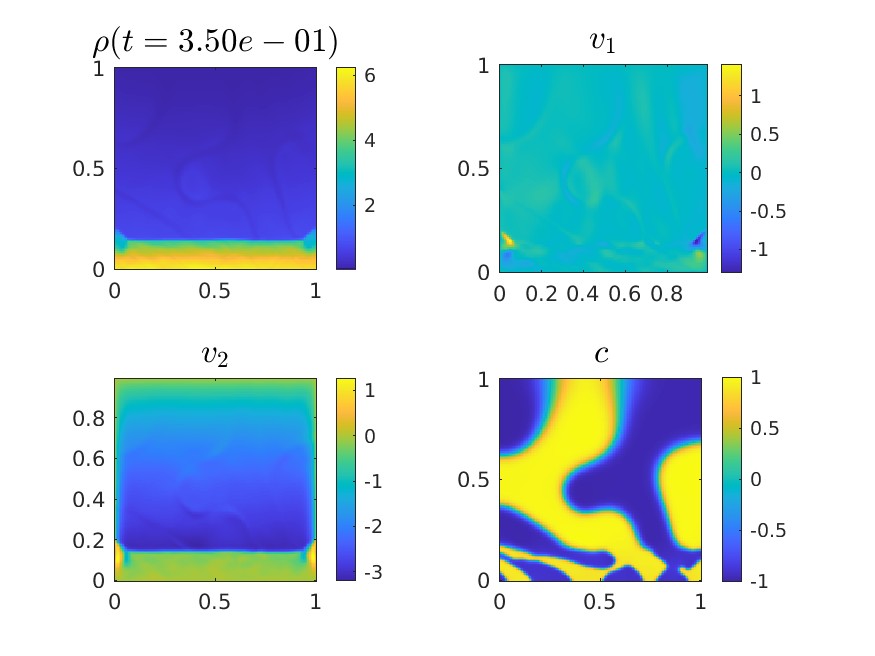}
    \end{minipage}
    \caption{Results for Test 3. Nucleation process is observed at $T=0.7, 1$. 
    The density becomes higher at the bottom due to gravitational effects.}
    \label{fig_test3_end}
\end{figure}
As the dynamics evolves, the density becomes higher near the bottom of the domain. 
The results are shown in Figure \ref{fig_test3_initial}, \ref{fig_test3_middle}, and \ref{fig_test3_end}.

\subsubsection{Mass conservation and preservation of bounds}
Figures \ref{fig_c_evol_tests} and \ref{fig_mass_conservation_tests} illustrates the mass conservation error and the minimum and maximum values of $c$ for Tests 1, 2, and 3, computed by
\begin{equation*}
    \begin{split}
        \text{err}_{\rho}(t^n) = {\sum_{i,j}(\rho^n_{i,j} - \rho^0_{i,j})},
        \quad
        \text{err}_{q}(t^n) = {\sum_{i,j}((\rho c)^n_{i,j} - (\rho c)^0_{i,j})}.
    \end{split}
\end{equation*}
It is observed that the order parameter $c$ remains roughly within the physical bounds $[-1,1]$ throughout the three simulations.
Moreover, the mass conservation error for both $\rho$ and $q=\rho c$ is preserved.
Therefore, the choice of the CFL parameter is adequate to ensure both stability and bound preservation.

\begin{figure}[h!]
  \centering
  \begin{minipage}[b]{0.32\textwidth}
        \centering
        \includegraphics[width=\textwidth]{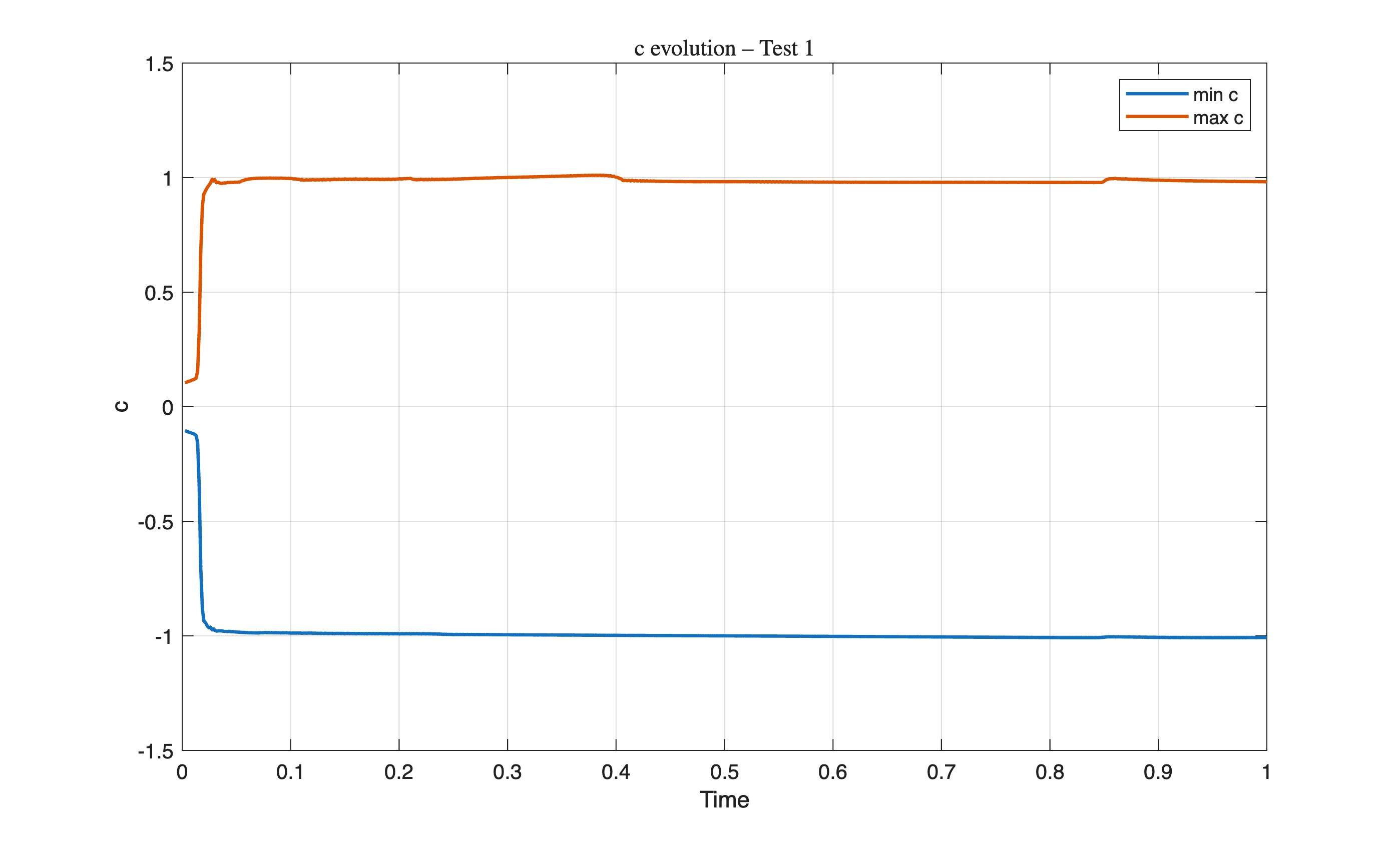}
    \end{minipage}
    \hfill
    \begin{minipage}[b]{0.32\textwidth}
        \centering
        \includegraphics[width=\textwidth]{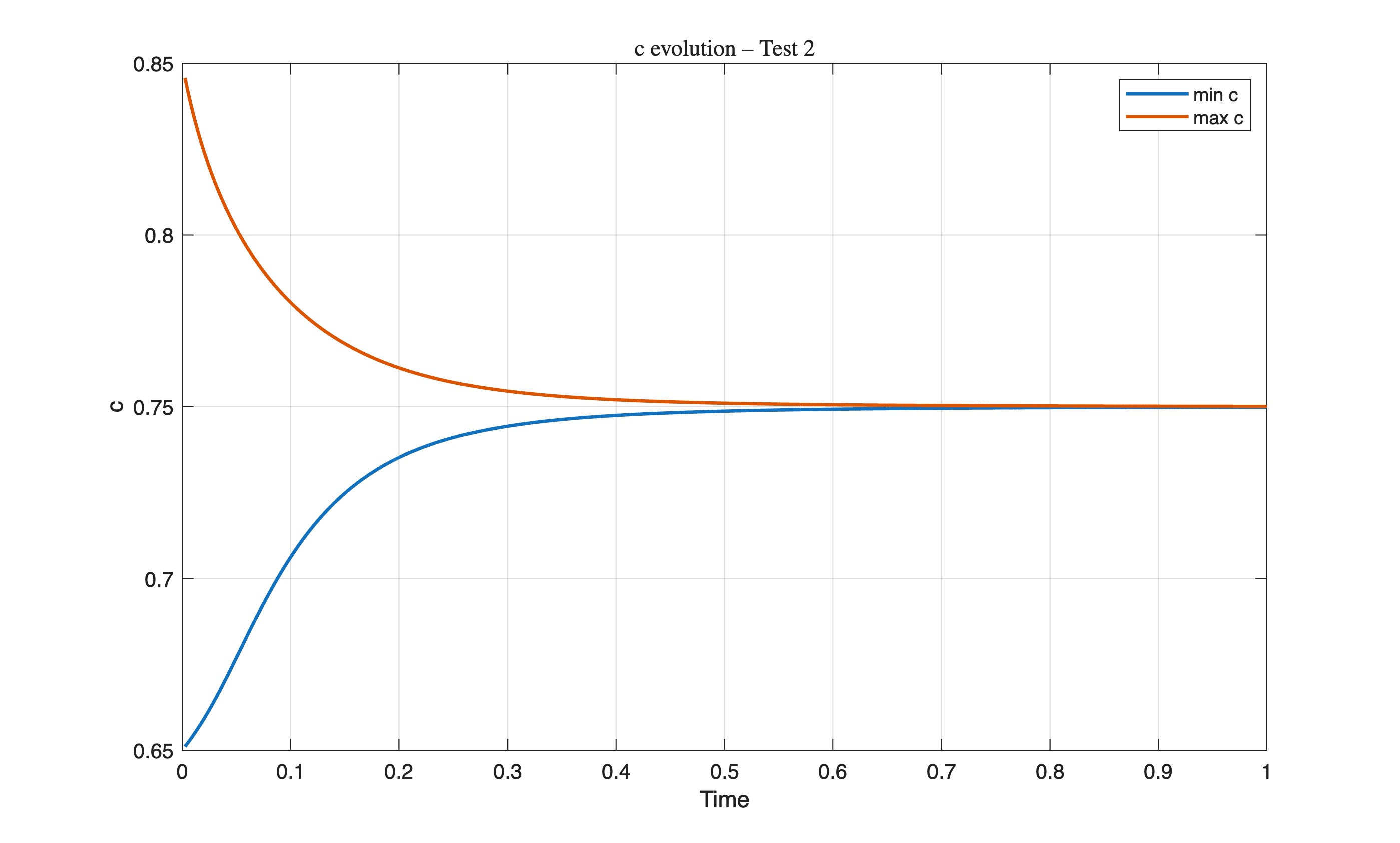}
    \end{minipage}
    \hfill
    \begin{minipage}[b]{0.32\textwidth}
        \centering
        \includegraphics[width=\textwidth]{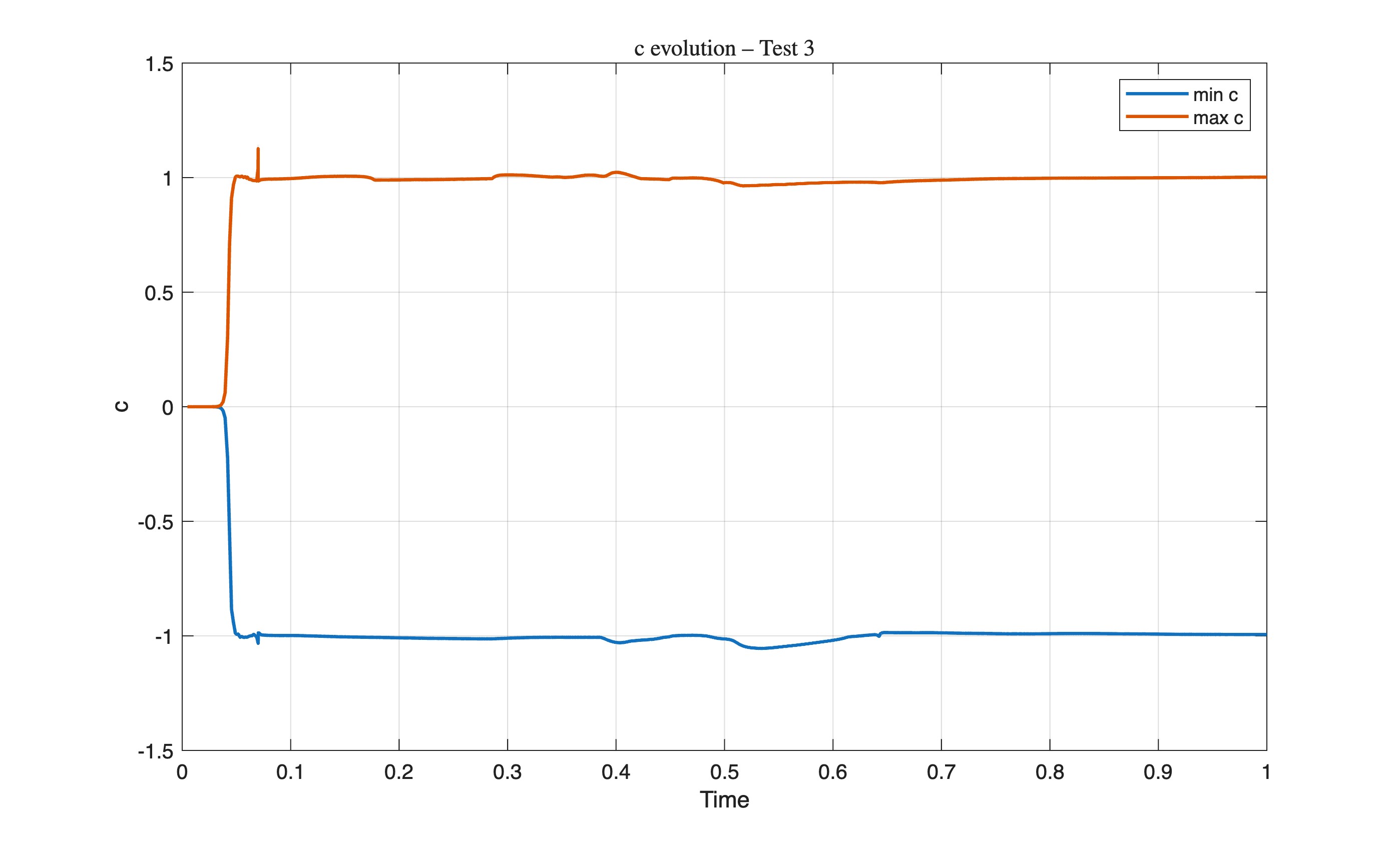}
    \end{minipage}
    \caption{Time evolution of the maximum and minim value of the order parameter $c$ for Test 1 (left), Test 2 (middle) and Test 3 (right).}
    \label{fig_c_evol_tests}
\end{figure}
\begin{figure}[h!]
  \centering
  \begin{minipage}[b]{0.32\textwidth}
        \centering
        \includegraphics[width=\textwidth]{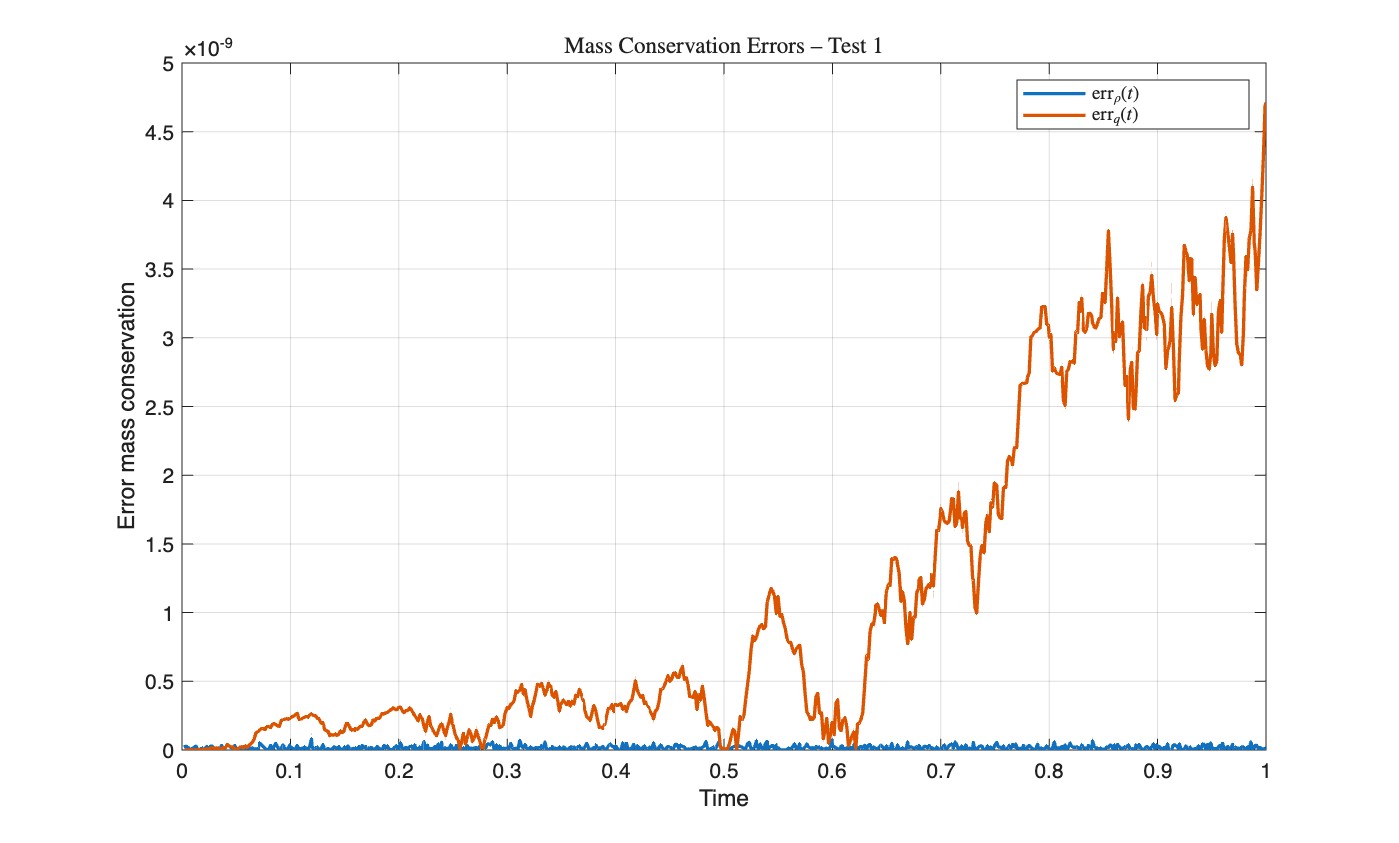}
    \end{minipage}
    \hfill
    \begin{minipage}[b]{0.32\textwidth}
        \centering
        \includegraphics[width=\textwidth]{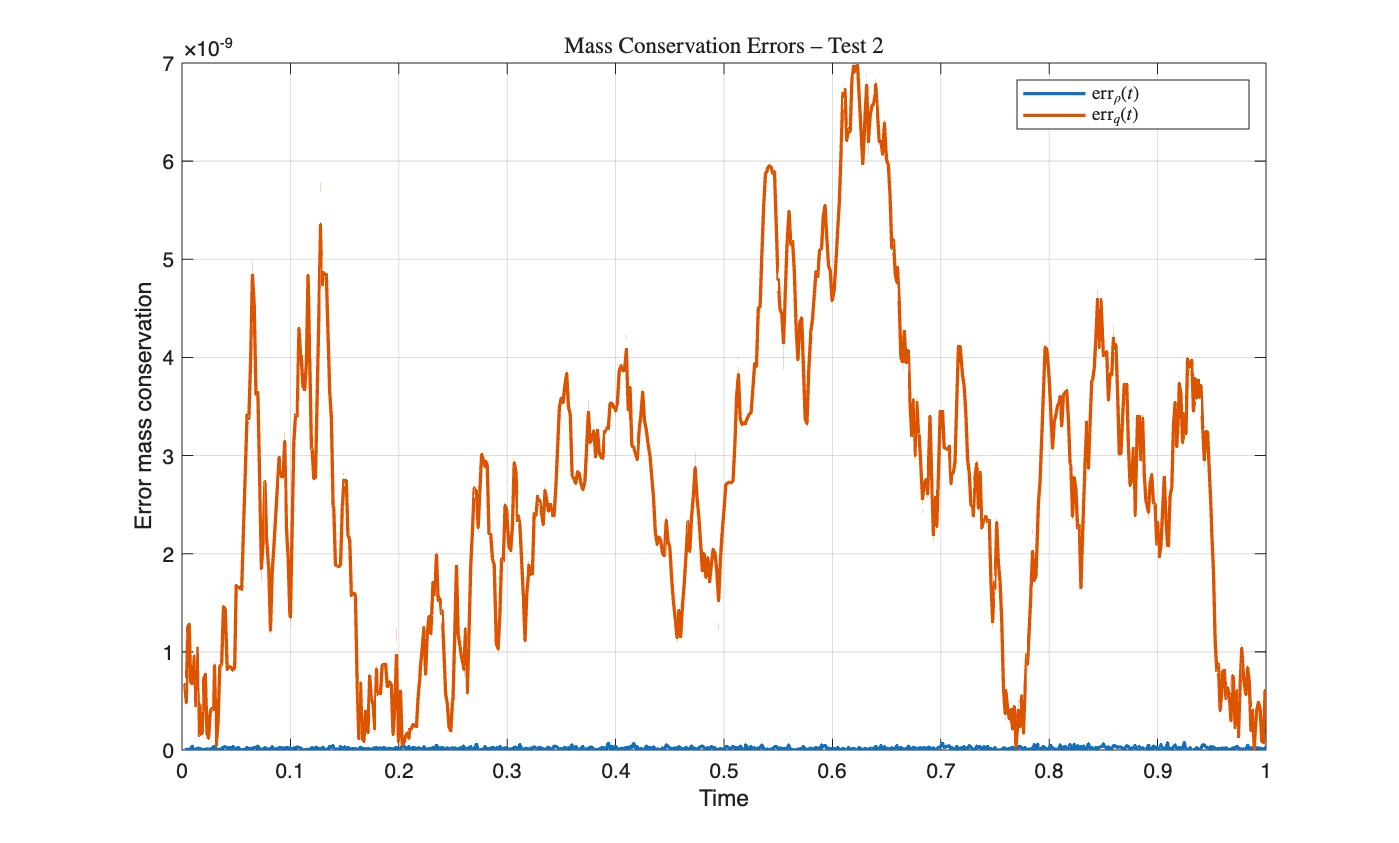}
    \end{minipage}
    \hfill
    \begin{minipage}[b]{0.32\textwidth}
        \centering
        \includegraphics[width=\textwidth]{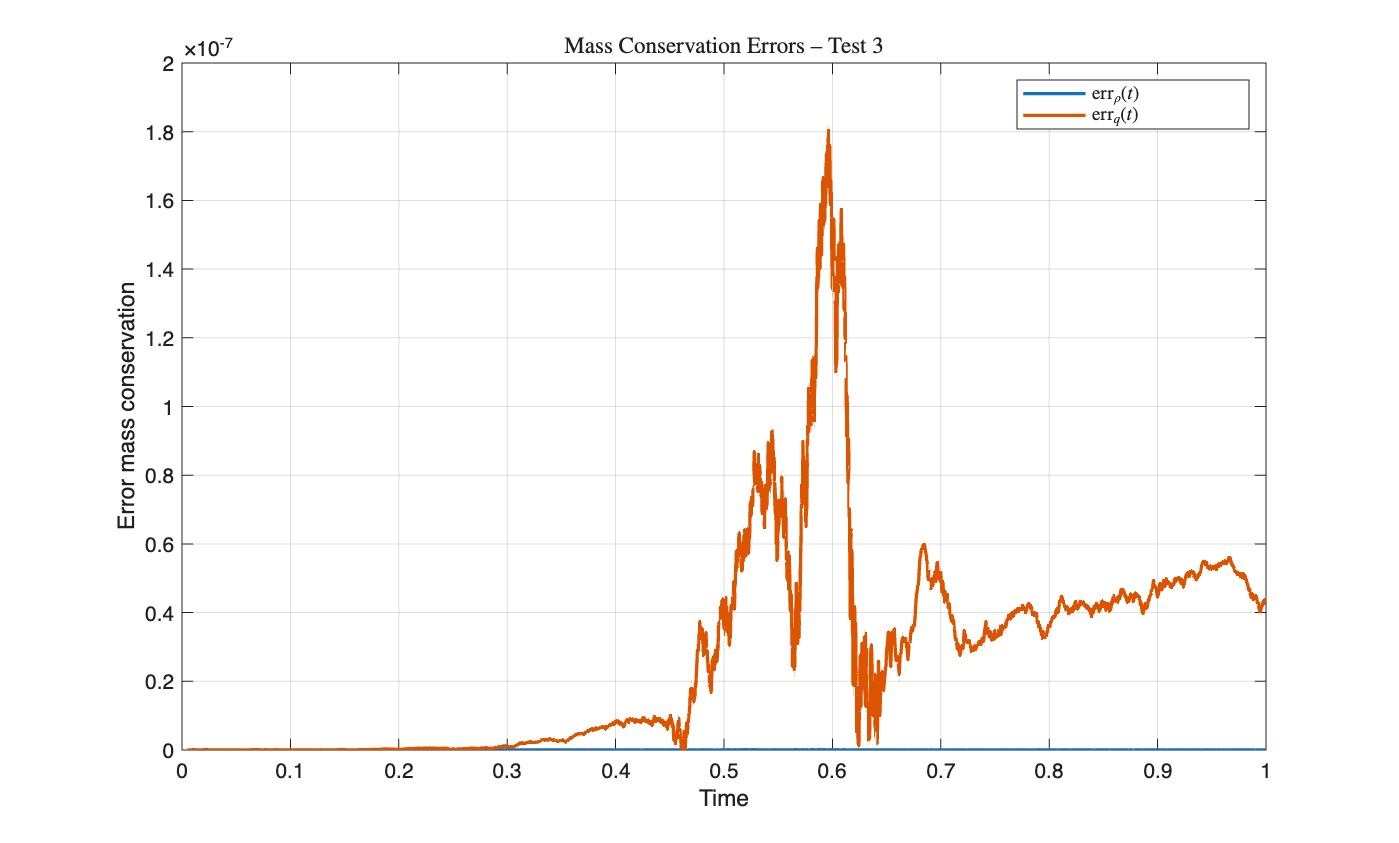}
    \end{minipage}
    \caption{Time evolution of mass conservation errors for $\rho$ and $q=\rho c$ for Test 1 (left), Test 2 (middle) and Test 3 (right).}
    \label{fig_mass_conservation_tests}
\end{figure}

\subsection{Test 4}
The following test, taken from \cite{ChenZhao20,mulet_24}, aims to show the dynamics of two kissing bubbles in a compressible fluid.
The initial condition satisfying boundary conditions \eqref{bdry_cond} is given by
\begin{equation*}
    \begin{aligned}
    \rho_0(x,y) &= 1, \\
    \mathbf{v}_0(x,y) &= (0,0), \\
    c_0(x,y) &=
    \tanh\!\left(
    \frac{
    \sqrt{(x - 0.4)^2 + (y - 0.5)^2} - 0.1
    }{2\varepsilon\sqrt{2}}
    \right) \\
    &\quad \cdot
    \tanh\!\left(
    \frac{
    \sqrt{(x - 0.6)^2 + (y - 0.5)^2} - 0.1
    }{2\varepsilon\sqrt{2}}
    \right).
    \end{aligned}
\end{equation*}
For this test the pressure coefficient is set to $C_p=10^4$, the viscosity coefficients to $\nu=\lambda=0.1$ and $\varepsilon=0.01$.
The merging process of the two bubbles is illustrated in Figure \ref{fig_test4_0level_c}, where the $0$-level sets of the order parameter $c$ are plot. 
Initially, the two bubbles are tangent at the point $(0.5, 0.5)$.
As time evolves, the two bubbles merge into a single bubble.
\begin{figure}[h]
    \centering
    \includegraphics[width=0.5\textwidth]{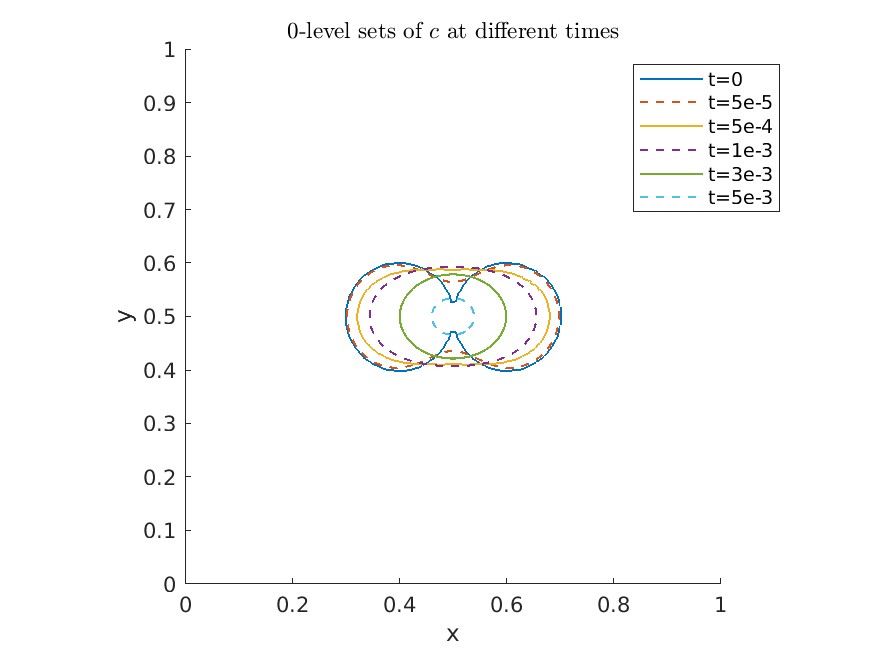}
    \caption{$0$-level sets of the $c$-variable at times $T=0, 5\cdot10^{-5}, 5\cdot10^{-4}, 1\cdot10^{-3}, 3\cdot10^{-3}, 5\cdot10^{-3}$.}
    \label{fig_test4_0level_c}
\end{figure}

Pressure fluctuations about their mean value and velocity field are shown in Figure \ref{fig_test4_vel_field}. 
It can be observed that vertexes appear around the merging bubbles. 
\begin{figure}[h]
    \centering
    \begin{minipage}[b]{0.45\textwidth}
        \centering
        \includegraphics[width=\textwidth]{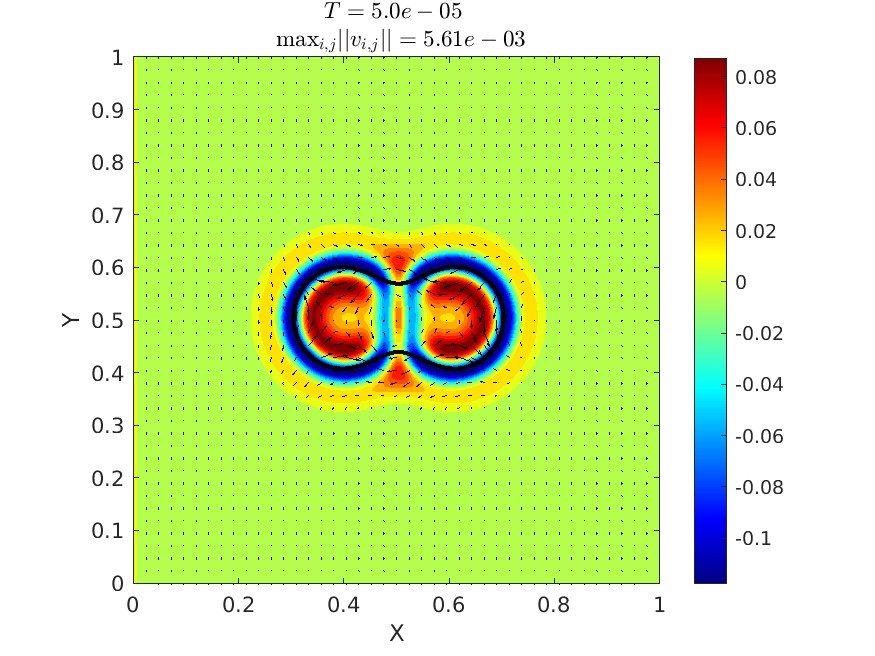}
    \end{minipage}
    \hfill
    \begin{minipage}[b]{0.45\textwidth}
        \centering
        \includegraphics[width=\textwidth]{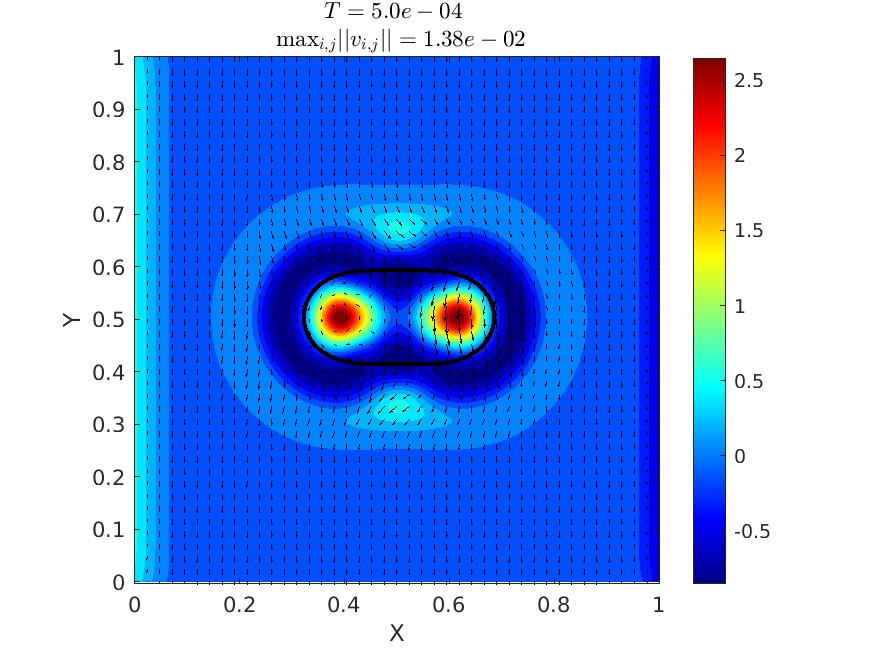}
    \end{minipage}
    \\[0.5em]
    \begin{minipage}[b]{0.45\textwidth}
        \centering
        \includegraphics[width=\textwidth]{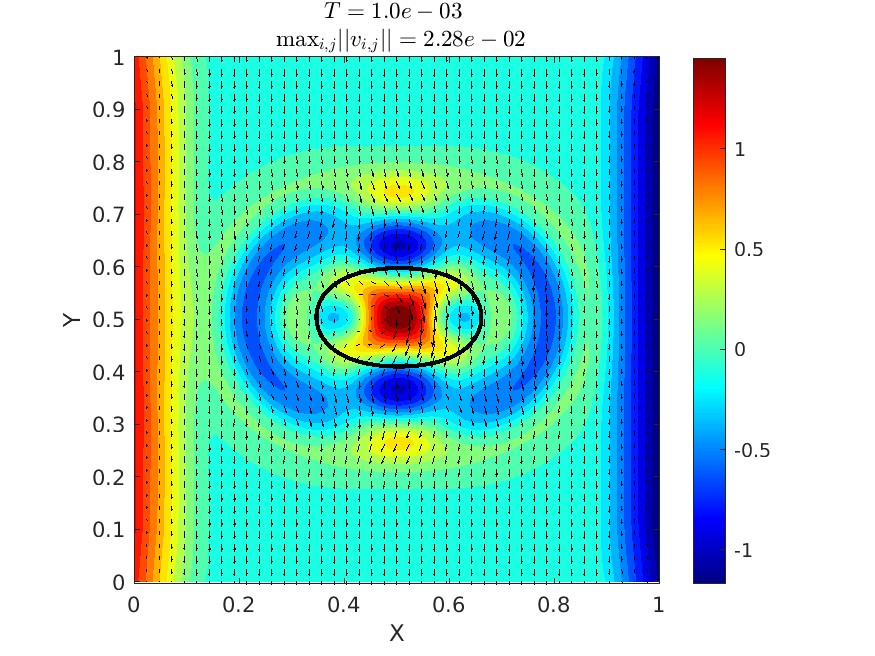}
    \end{minipage}
    \hfill
    \begin{minipage}[b]{0.45\textwidth}
        \centering
        \includegraphics[width=\textwidth]{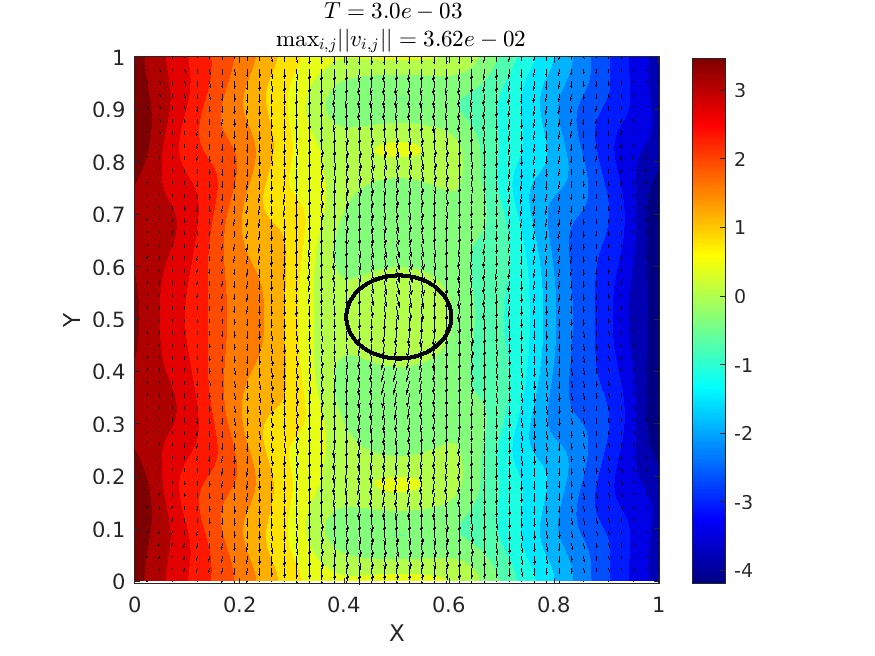}
    \end{minipage}
    \label{fig_test4_vel_field}
    \caption{Pressure fluctuations about their mean value and velocity field for times $T = 5\cdot10^{-5}, 5\cdot10^{-4}, 1\cdot10^{-3}, 3\cdot10^{-3}$.
    Vortexes are originated around the bubbles.}
\end{figure}

\section{Conclusions and future work}\label{section_conlusion}
In this work, we have developed an efficient second-order IMEX schemes
on staggered grids for the two-dimensional compressible isentropic
CHNS equations, with provably symmetric and positive definite matrices in the discretization of the viscosity terms.

Our numerical results show that this approach effectively captures the oscillations arising from high-order finite-differences discretizations of spatial derivatives. 
By treating the convective terms explicitly and the remaining terms implicitly, the time-step is constrained only by the convective subsystem.
Moreover, we have numerically verified second-order accuracy as well as bound preservation for both the density and the order parameter.

We point out that the time-step stability condition \eqref{time_step_cond} depends on the coefficient $C_{p}$. 
Consequently, for stiff pressure laws, where $C_p$ takes large values, the method is constrained to take smaller time steps.
To overcome this issue, we plan to extend the present framework to asymptotic-preserving IMEX schemes in low Mach number regimes \cite{Alazard06,LM98}.

We also intend to generalize the current approach to the three-dimensional case using Galerkin techniques, as well as to the quasi-incompressible CHNS model \cite{LT98}.

When approximating $C^{(i)}$ by the linear system derived from the CH type equation \eqref{eq_system_c}, a conjugate gradient method can be employed \cite{mulet_24}.
We plan to incorporate multigrid techniques as preconditioners for conjugate gradient solvers to further improve efficiency.

We aim to investigate high-order reconstructions for the convective terms to ensure both positivity and bound preservation for the density and the order parameter, respectively.

\subsection*{Conflict of interest} The authors declare that they have no conflict of interest.

\subsection*{Data Availability Statements} Data sharing is not applicable to this article as no datasets were generated or analyzed during the current study.

\vspace{-0.25cm}
\section*{Acknowledgments and competing interests}
\vspace{-0.15cm}
This paper has received financial support from the research projects
PID2023-146836NB-I00, granted by MCIN/ AEI /10.13039/ 501100011033, and 
\newline
CIAICO/2024/089, granted by GVA.
\vspace{-0.25cm}

\section*{References}
\bibliographystyle{plain}
{\small

}

\end{document}